\numberwithin{equation}{section}
\theoremstyle{plain}
\newtheorem{theorem}{Theorem}[section]
\newtheorem{definition}[theorem]{Definition}
\newtheorem{lemma}[theorem]{Lemma}
\newtheorem{remark}[theorem]{Remark}
\newcommand{\thistheoremname}{}
\newtheorem{genericthm}[theorem]{\thistheoremname}
\newtheorem*{genericthm*}{\thistheoremname}
\newenvironment{assumption*}[1]
  {\renewcommand{\thistheoremname}{#1}%
   \begin{genericthm*}}
  {\end{genericthm*}}
\newcommand{\R}{\mathbb{R}}
\newcommand{\sV}{\mathcal{V}}
\newcommand{\sG}{\mathcal{G}}
\newcommand{\sC}{\mathcal{C}}
\newcommand{\sL}{\mathcal{L}}
\newcommand{\N}{\mathbb{N}}
\newcommand{\intr}{\text{Int}}
\title{Convergence Guarantees for Neural Network-Based Hamilton--Jacobi Reachability}
\author{William Hofgard}
\address{Department of Electrical Engineering, Stanford University, Stanford, CA}
\email{whofgard@stanford.edu}
\def\namedlabel#1#2{\begingroup
    #2%
    \def\@currentlabel{#2}%
    \phantomsection\label{#1}\endgroup
}
\date{\today}
\begin{document}
\maketitle
\begin{abstract}
\noindent
We provide a novel uniform convergence guarantee for DeepReach, a deep learning-based method for solving Hamilton--Jacobi--Isaacs (HJI) equations associated with reachability analysis. Specifically, we show that the DeepReach algorithm, as introduced by Bansal et al. in their eponymous paper from 2020, is \textit{stable} in the sense that if the loss functional for the algorithm converges to zero, then the resulting neural network approximation converges uniformly to the classical solution of the HJI equation, assuming that a classical solution exists. We also provide numerical tests of the algorithm, replicating the experiments provided in the original DeepReach paper and empirically examining the impact that training with a supremum norm loss metric has on approximation error.
\end{abstract}
\tableofcontents
\section{Introduction}
\label{sec:intro}
For general nonlinear optimal control problems, Hamilton--Jacobi (HJ) reachability analysis describes states from which trajectories will eventually reach a set of ``unsafe'' states, even under an optimal control policy \cite{reachability-overview}. The set of states from which trajectories will eventually enter the ``unsafe'' set is typically referred to as the \textit{backward reachable tube} (BRT). Determining the BRT for a given optimal control problem is of particular interest when designing autonomous systems that must avoid collisions or otherwise unsafe configurations (e.g., autonomous vehicles, aircraft, and many other pertinent, real-world examples) \cite{deepreach}.

HJ reachability analysis requires solving the associated Hamilton--Jacobi--Isaacs (HJI) equation, a nonlinear, first-order variational partial differential equation (PDE) \cite{reachability-overview}. Indeed, the HJI equation characterizes the value function for the reachability problem, and the sublevel sets of the value function in turn determine the BRT. In low dimensions, the HJI equation can be solved relatively easily using a grid-based PDE solver. In high dimensions, recent work confronts the so-called ``curse of dimensionality,'' an issue faced by classical, grid-based solvers, by instead using parametrized neural networks to solve high-dimensional PDEs \cite{dgm,deep-bsde-orig}. For instance, Bansal et al. recently developed DeepReach, a deep learning-based solver for the HJI equation that determines approximate BRTs for high-dimensional optimal control problems \cite{deepreach}. However, autonomous systems require robust safety guarantees, and it is often difficult to quantify the approximation error made by neural networks when solving high-dimensional PDEs that lack analytical solutions \cite{deepreach-guarantees}.

We approach this problem from the perspective of PDE theory, aiming to provide a direct uniform convergence guarantee for DeepReach. Building upon the work of \cite{hofgard2024deep, cohen2024deep,l2-pinns}, we show that, under suitable technical assumptions, if the neural network loss for DeepReach approaches zero, then the resulting value function will converge \textit{uniformly} to the true value function, which solves the HJI equation. Empirical evidence for this result is already present in \cite{deepreach-guarantees}, which find that if DeepReach is trained for 100k epochs, then the resulting approximate BRT is close to the true BRT for a baseline example in three dimensions. We also include numerical experiments, replicating the results in \cite{deepreach-guarantees}, focused on multi-vehicle collision avoidance.
\vspace{-0.3cm}
\section{Related Work}
\label{sec:rel-work}

This paper draws upon work done in two areas: deep learning methods for HJ reachability and more general convergence guarantees for deep learning-based PDE solvers. In the latter category, relevant work can be found in \cite{dgm, hofgard2024deep, cohen2024deep, l2-pinns}, which consider solving first-order, nonlinear PDEs using parametrized neural networks. The technical details of establishing convergence may vary with the class of PDEs at hand, but the proof technique presented in \cite{hofgard2024deep} is applicable to DeepReach. Namely, the technique of \cite{hofgard2024deep} can be extended to show that the DeepReach algorithm obtains uniform convergence. Arguing via the properties of viscosity solutions to the HJI equations, which are discussed in detail in \cite{reachability-overview} and \cite{evans-hji}, one may apply a similar technique to DeepReach.

In terms of deep learning methods for HJ reachability, we primarily build upon the work done by Bansal et al. \cite{deepreach}, in which the DeepReach algorithm was first introduced in 2020. As mentioned in Section~\ref{sec:intro}, DeepReach attempts to solve the HJI equation arising is reachability analysis by introducing a parametrized class of deep neural networks that approximately solve the HJI equation. DeepReach's formulation is analogous to the deep Galerkin method (DGM) first presented in \cite{dgm}, which has since become the standard approach for solving generic high-dimensional PDEs. Consequently, much of the uniform convergence analysis for high-dimensional HJB equations, presented in \cite{dgm, hofgard2024deep, cohen2024deep}, applies naturally to DeepReach, with several important technical modifications outlined in Appendix~\ref{sec:proofs}.

Since the publication of DeepReach in 2020 by Tomlin and Bansal, additional analysis of the method's approximation error has been carried out, as in \cite{deepreach-guarantees,conformal-verification,exact-boundary-deepreach,enhancing-deepreach}. For instance, in \cite{deepreach-guarantees}, the authors propose a technique labeled scenario optimization, in which states near the boundary of the approximate BRT are sampled randomly in order to expand the approximate BRT. Consequently, using \cite[Algorithm 1]{deepreach-guarantees}, the authors guarantee that the resulting approximate BRT covers the true BRT with high probability. This probabilistic guarantee, however, requires the implementation of an additional verification algorithm after DeepReach returns an approximate value function, leaving the DeepReach algorithm itself unchanged. In \cite{conformal-verification}, which builds upon the scenario optimization approach of \cite{deepreach-guarantees}, the same authors instead consider conformal prediction, a common technique in establishing confidence bounds in a wide variety of statistical applications. Again, \cite{conformal-verification} ensures that, with high probability, an envelope of the approximate BRT contains the true BRT. This type of analysis is particularly relevant for applications in which the BRT represents unsafe states, such as in collision avoidance.

Finally, several other recent publications modify the training process for DeepReach to improve its approximation error and training efficiency. For instance, \cite{enhancing-deepreach} analyzes the impact that the choice activation has on DeepReach performance, confirming that sinusoidal activation performs better than more standard hyperbolic tangent or ReLU networks. On the other hand, \cite{exact-boundary-deepreach} restructures the DeepReach loss to \textit{guarantee} that DeepReach accurately computes the terminal condition to the HJI equation, before propagating the terminal condition back in time to approximate the rest of the value function. Although \cite{exact-boundary-deepreach} provides no formal convergence guarantees, the restructured training loss typically performs better than the original loss.

\section{Mathematical Background}
\label{sec:prob-statement}

\subsection{HJ Reachability and DeepReach}
We consider the general formulation of HJ reachability presented in \cite{deepreach}. In particular, the dynamics of an autonomous agent in an environment are modeled by a state $x \in \R^n$, a control $u \in \mathcal{U} \subseteq \R^m$, and a disturbance $d \in \mathcal{D} \subseteq \R^m$, with $\dot{x} = f(x, u, d)$. Denote by $\xi^{u, d}_{x, t}(\tau)$ the state at time $\tau$, under the control $u(\cdot)$ and disturbance $d(\cdot)$, starting at time $t$ and state $x$. Denoting the safe set of states by $\sL$, define the BRT on the time interval $[t, T]$, under \textit{worst-case} disturbances $d(\cdot)$, by
\begin{align}
\label{eq:brt}
\sV_{\text{BRT}}(t) := \{x \in \R^n : \forall u(\cdot), \exists d(\cdot), \exists \tau \in [t, T] \text{ such that } \xi^{u,d}_{x,t}(\tau) \in \sL\}.
\end{align}
DeepReach applies just as well to the problems that necessitate backward reach-avoid tubes (BRATs),
in which a set of goal states $\sL$ must be reached by the agent \textit{and} a set of unsafe states $\sG$ must be avoided. In this setting, the BRAT can be formally defined by
\begin{align}
\label{eq:brat}
\sV_{\text{BRAT}}(t) := \{x \in \R^n : \forall d(\cdot), \exists u(\cdot), &\forall s \in [t, T], \xi^{u,d}_{x,t}(s) \notin \sG, \\
&\exists \tau \in [t, T], \xi^{u,d}_{x,t}(\tau), \in \sL\}.
\end{align}
Again, the above setting assumes that the agent acts optimally under the worst-case avoidance, and the BRAT is precisely the set of all states from which the agent can reach the target set $\sL$ after some time while avoiding the unsafe set $\sG$ for all time.

From the above formulation of BRTs and BRATs, we can present the corresponding HJI equations, again based on the discussion from \cite{deepreach}. This paper only considers the case of BRTs for simplicity. Given a target set $\sL$, define a function $\ell : \R^n \to \R$ such that $\sL = \{x \in \R^n : \ell(x) \leq 0\}$. Define a cost functional, starting at initial state $x$ and initial time $t$, by
\begin{align}
\label{eq:cost}
J(x, t, u(\cdot), d(\cdot)) = \min_{\tau \in [t, T]} \ell(\xi^{u,d}_{x,t}(\tau)).
\end{align}
In the case that $\sL$ is an unsafe set that the agent wishes to avoid for all time $t$, the agent first selects an optimal control that maximizes $\ell$, and the disturbance (viewed as an adversarial player in a zero-sum, two-player game) selects the input that minimizes $\ell$, yielding a value function of the form
\begin{align}
\label{eq:value-func}
V(t, x) := \inf_{d(\cdot)} \sup_{u(\cdot)} \left\{J\left(x, t, u(\cdot), d(\cdot)\right)\right\}.
\end{align}
Via a standard dynamic programming argument, presented in \cite{reachability-overview}, one obtains the HJI variational equation for the reachability problem, given by
\begin{align}
\label{eq:hji-reach}
\begin{split}
&\min\left\{\partial_t V(t, x) + H(t, x), \ell(x) - V(t, x)\right\} = 0, \\
& V(T, x) = \ell(x).
\end{split}
\end{align}
Above, the Hamiltonian $H : \R \times \R^n \to \R$ is defined by
\begin{align}
\label{eq:hamiltonian}
H(t, x) := \sup_{u} \inf_{d} \langle \nabla_x V(t, x), f(x, u, d) \rangle.
\end{align}
Roughly speaking, the term $\ell(x) - V(t, x)$ in Equation~\eqref{eq:hji-reach} captures the case in which an optimal trajectory enters the set $\sL$ prior to time $T$, as if $V(t,x) = \ell(x)$, then $\xi^{u, d}_{x,t}(\tau) \in \sL$ for some $\tau \in [t, T]$.
The BRT itself is computed as a sublevel set of the solution $V$ to the HJI equation, with
\begin{align*}
    \sV_{\text{BRT}}(t) := \{x : V(t, x) \leq 0\},
\end{align*}
and a corresponding optimal control
\begin{align*}
    u^\star(t, x) := \arg \max_{u(\cdot)} \min_{d(\cdot)} \langle \nabla_x V(t, x), f(x, u, d) \rangle.
\end{align*}
To solve the Equation~\eqref{eq:hji-reach} numerically, consider a parametrized neural network $V_\theta : \R \times \R^n \to \R$, with parameters $\theta \in \R^P$, that approximates the true solution to Equation~\eqref{eq:hji-reach}. At this point, the architecture of the neural network remains unspecified, although the original implementation of DeepReach utilizes a fully-connected, feedforward neural network with sinusoidal activations between layers \cite{deepreach}. This is the same architecture considered in subsequent papers that extend DeepReach, as in \cite{deepreach-guarantees} and \cite{enhancing-deepreach}. In \cite{deepreach}, a standard $L^2$-loss functional is used. Below, we make a slight modification, instead using \textit{sup-norm loss} for both our theoretical guarantees an numerical experiments:
\begin{align}
\label{eq:nn-loss}
\begin{split}
&h_1(\theta) := \|V_\theta(T, x) - \ell(x)\|_\infty, \\
&h_2(\theta) := \|\min\{\partial_t V_\theta(t, x) + H_\theta(t,x), \ell(x) - V_\theta(t, x)\}\|_{\infty}, \\
&L(\theta) := h_1(\theta) + \lambda h_2(\theta),  
\end{split}
\end{align}
where, in practice, $(t, x) \in \R \times \R^n$ are sampled points, and $H_\theta$ is the Hamiltonian associated with the approximate value function $V_\theta$, defined as in Equation~\eqref{eq:hamiltonian}. The DeepReach training procedure then mimics stochastic gradient descent (SGD). In particular, the neural network is first trained with $\lambda = 0$ to appropriately learn the terminal condition of the PDE, at $t = T$. Then, $t$ is linearly decreased from $t = T$ to $t = 0$, sampling $K$ points $\{x_k\}_{k=1}^K$ at each step and performing an SGD step on the neural network parameters $\theta$ to learn the solution to Equation~\eqref{eq:hji-reach}. 
\begin{remark}
\normalfont In \cite{deepreach}, the above loss is defined with respect to the $\ell^1$-norm or $\ell^2$-norm. However, recent research surrounding neural network solutions to second-order HJB equations indicates that such a choice of loss may not result in convergence to the true value function \cite{l2-pinns}. Instead, using a sup-norm loss metric as above brings both theoretical benefits, as seen in the convergence proof in Appendix~\ref{sec:proofs}, and practical benefits, as seen in Section~\ref{sec:experiments} below. As noted above, the sup-norms involved in Equation~\eqref{eq:nn-loss} are approximated by sampling in practice: \cite{hofgard2024deep} and \cite{cohen2024deep} discuss convergence guarantees for this sampling-based approach. For our theoretical guarantees, we assume that the sup-norm loss in Equation~\eqref{eq:nn-loss} can be approximated to arbitrary accuracy.
\end{remark}
\subsection{Baseline Collision Avoidance Application}
As a baseline example, consider the case of two airplanes approaching each other midair, as in the original DeepReach paper \cite{deepreach}. In the worst-case scenario, one of the planes acts as a pursuer that attempts to collide with the other plane, while the other plane acts as an evader. The relative dynamics of this system, presented in \cite{deepreach} and \cite{exact-boundary-deepreach}, are given by
\begin{align}
\label{eq:plane-dynamics}
& \dot{x}_1 = -v_e + v_p \cos \theta + \omega_e x_2, \quad \dot{x}_2 = v_p \sin \theta - \omega_e x_1, \quad \dot{\theta} = \omega_p - \omega_e, 
\end{align}
where $x = (x_1, x_2)$ represents the relative positions of the two planes in a two-dimensional plane, and $\theta$ is their relative heading. The two velocities $v_e$ and $v_p$ are the evader's and pursuer's velocities respectively, and the angular velocities $\omega_e$ and $\omega_p$ are defined analogously. For simplicity, the velocities $v_e$ and $v_p$ are held constant, while the two agents select inputs $\omega_e$ and $\omega_p$. Note that, in our notation above, $\omega_p$, the control input for the pursuer, is actually the \textit{disturbance} in the context of the reachability problem. We constrain $|\omega_e|, |\omega_p| \leq \omega_{\max}$ for some constant $\omega_{\max} > 0$, and consider a standard unsafe set of the form $\sL = \{x : \|x\|_2 \leq \beta\}.$

Taking $\ell(x) = \|x\|_2 - \beta$, where $\beta > 0$ is a safety parameter, it follows that $\sL = \{x : \ell(x) \leq 0\}$, as is desired in the setting of HJ reachability. Consequently, we can carry out the DeepReach algorithm as outlined above, training the neural network using the DeepReach loss in Equation~\eqref{eq:nn-loss} to obtain an approximation $V_\theta$ for the value function. In turn, the sublevel set
\begin{align*}
\sV^\theta_{\text{BRT}}(t) := \{x : V_\theta(t, x) \leq 0\}
\end{align*}
will approximate the true BRT for the collision avoidance problem. As noted in \cite{analytic-brt}, this example is a natural baseline to consider for reachability analysis because the \textit{true} BRT $\sV$ is possible to describe analytically (at least, very nearly, up to some sampling error). Additionally, as shown in \cite{deepreach}, it is possible to analytically compute the Hamiltonian for this problem. Denoting
\begin{align*}
  \begin{bmatrix}p_1 \\ p_2 \\ p_3 \end{bmatrix} := \begin{bmatrix} \partial_{x_1} V(t, x_1, x_2, \theta) \\ \partial_{x_2} V(t, x_1, x_2, \theta) \\ \partial_{\theta} V(t, x_1, x_2, \theta) \end{bmatrix},
\end{align*}
we have that
\begin{align}\label{eq:analytical-hamiltonian}
  H(t, x) = p_1(-v_e + v_p \cos x_3) + p_2(v_p \sin x_3) - \omega_{\max}|p_1 x_3 - p_2 x_1 - p_3| + \omega_{\max} p_3
\end{align}
Because the Hamiltonian can be computed explicitly (rather than solving an optimization problem), implementing the DeepReach loss is straightforward in this instance. For general reachability problems, however, it is possible to efficiently solve the optimization problem in Equation~\eqref{eq:hamiltonian} at each step \cite{deepreach}.

\section{Theoretical Guarantees}
\label{sec:approach}
We provide two results concerning the convergence of DeepReach for solving the HJI equation in Equation~\eqref{eq:hji-reach}. All technical results are presented in full, including detailed proofs, in Appendix~\ref{sec:proofs}. First, we obtain an \textit{existence} result as a relatively straightforward consequence of the universal approximation power of neural networks. Specifically, neural networks with smooth, bounded, non-constant activations (as is the case in the sinusoidal implementation of DeepReach) can approximate functions and their derivatives \textit{arbitrarily} well, as in \cite[Theorem 3]{univ-approx-thm}. Such universal approximation results are non-constructive in nature, but they guarantee that a sequence of neural networks taking the loss in Equation~\eqref{eq:nn-loss} to zero exists, the first step in obtaining a uniform convergence guarantee. Under the assumption that the value function $V$ that solves Equation~\eqref{eq:hji-reach} is locally-Lipschitz with locally-Lipschitz gradients, denoted by $V \in \sC^{1,1}_{\text{loc}}([0, T] \times \R^n)$, we prove the following:
\begin{theorem}
\label{th:existence}
For every $\varepsilon > 0$, there exists a constant $C > 0$ that depends only on the dynamics $\dot{x} = f(x, u, d)$ of the underlying reachability problem such that for some $\theta \in \R^P$, the DeepReach loss in Equation~\eqref{eq:nn-loss} satisfies $L(\theta) \leq C\varepsilon$.
\end{theorem}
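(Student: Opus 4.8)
The plan is to combine the $\sC^{1,1}_{\text{loc}}$ regularity of the true value function $V$ with the simultaneous function-and-derivative universal approximation property of smooth-activation networks \cite{univ-approx-thm}, exploiting the fact that $V$ itself makes both terms of the loss vanish identically. First I would restrict attention to a compact set $\Omega := [0,T] \times K$, where $K \subset \R^n$ is a compact set containing all spatial sampling points; the idealized assumption that the sup-norm loss in \eqref{eq:nn-loss} can be approximated to arbitrary accuracy lets us work with the true supremum over $\Omega$. Since $f$ is continuous and, in the problems under consideration, the control and disturbance sets $\mathcal{U}, \mathcal{D}$ are bounded, the quantity $M_f := \sup \|f(x,u,d)\|_2$ over $K \times \mathcal{U} \times \mathcal{D}$ is finite and depends only on the dynamics.

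Next I would invoke \cite[Theorem 3]{univ-approx-thm}: since $V \in \sC^{1,1}_{\text{loc}}([0,T]\times\R^n) \subseteq C^1(\Omega)$ and the sinusoidal activation is smooth, bounded, and non-polynomial, for any $\varepsilon > 0$ there exist parameters $\theta \in \R^P$ with $\|V_\theta - V\|_{\infty}$, $\|\partial_t V_\theta - \partial_t V\|_\infty$, and $\|\nabla_x V_\theta - \nabla_x V\|_\infty$ all strictly less than $\varepsilon$ on $\Omega$. The terminal term is then immediate: using the terminal condition $V(T,\cdot) = \ell$, we obtain $h_1(\theta) = \|V_\theta(T,\cdot) - V(T,\cdot)\|_\infty < \varepsilon$.

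For $h_2(\theta)$, the key observation is that $V$ solves \eqref{eq:hji-reach}, so $\min\{\partial_t V + H, \ell - V\} \equiv 0$ on $\Omega$; hence $h_2(\theta)$ equals the sup-norm of the difference of the two $\min$-expressions evaluated at $V_\theta$ and at $V$. I would use the elementary bound $|\min\{a,b\} - \min\{c,d\}| \le \max\{|a-c|, |b-d|\}$ to reduce to controlling $\|\partial_t V_\theta - \partial_t V\|_\infty + \|H_\theta - H\|_\infty$ and $\|(\ell - V_\theta) - (\ell - V)\|_\infty = \|V_\theta - V\|_\infty$; the latter is $< \varepsilon$. For the Hamiltonian, the inequality $|\sup_u \inf_d g(u,d) - \sup_u \inf_d h(u,d)| \le \sup_{u,d} |g(u,d) - h(u,d)|$ together with Cauchy--Schwarz gives $\|H_\theta - H\|_\infty \le M_f \|\nabla_x V_\theta - \nabla_x V\|_\infty < M_f \varepsilon$. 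Combining these, $h_2(\theta) \le (1 + M_f)\varepsilon$, and therefore $L(\theta) = h_1(\theta) + \lambda h_2(\theta) \le \big(1 + \lambda(1 + M_f)\big)\varepsilon =: C\varepsilon$, where $C$ depends only on $M_f$ (with the weighting $\lambda$ fixed by the algorithm). This is exactly the claimed bound.

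The main obstacle is the Hamiltonian comparison: one must verify that the nested $\sup/\inf$ structure propagates the gradient approximation error with a constant controlled purely by $\sup\|f\|$, and that this then interacts correctly with the outer $\min$ via the Lipschitz-type bound above. A secondary point requiring care is confirming that the cited universal approximation theorem genuinely delivers \emph{simultaneous} $C^1$ approximation for the DeepReach architecture — this is precisely where the smoothness and non-polynomiality of the sinusoidal activation enter — and that the reduction to the compact domain $\Omega$ is legitimate under the stated idealization that the sup-norm loss can be approximated arbitrarily well by sampling.
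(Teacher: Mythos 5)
Your proposal is correct and follows essentially the same route as the paper's proof: simultaneous $C^1$ universal approximation of $V$ on the compact set $\Omega_K$, the terminal term controlled directly by $V(T,\cdot)=\ell$, and the Hamiltonian error controlled by the $\sup/\inf$ interchange plus Cauchy--Schwarz and boundedness of $f$, yielding $L(\theta)\le C\varepsilon$ with $C$ depending on $\sup\|f\|_2$ and $\lambda$. The one place you genuinely streamline is the residual term: the paper establishes $|h_2|<C\varepsilon$ by a two-sided argument (a lower bound from $\partial_t V+H\ge 0$ and $\ell-V\ge 0$, then an upper bound via a case split on which branch of the $\min$ vanishes for the true solution), whereas you invoke the $1$-Lipschitz property $|\min\{a,b\}-\min\{c,d\}|\le\max\{|a-c|,|b-d|\}$ together with $\min\{\partial_t V+H,\ell-V\}\equiv 0$, which subsumes both cases in one line; this is a valid and cleaner way to reach the same bound.
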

As remarked above, the proof of this theorem involves an application of the universal approximation theorem in \cite[Theorem 3]{univ-approx-thm} and subsequent algebraic manipulations of the DeepReach loss. An existence result, while encouraging, is not immediately useful. However, building upon Theorem~\ref{th:existence}, we will also prove the following \textit{convergence} result, which is far more practical in nature and does not rely upon a non-constructive universal approximation theorem.
\begin{theorem}
\label{th:convergence}
If a sequence of parameters $\{\theta^{(k)}\}_{k \in \N}$ is such that $L(\theta^{(k)}) \to 0$ as $k \to \infty$, then $V_\theta \to V$ uniformly, in the sense that for any compact set $K \subset \R^n$,$$\sup_{(t,x) \in [0,T] \times K} |V_{\theta^{(k)}}(t,x) - V(t, x)| \to 0$$as $k \to \infty$.
\end{theorem}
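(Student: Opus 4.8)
The plan is to read Theorem~\ref{th:convergence} as a \emph{stability} statement for viscosity solutions: the hypothesis $L(\theta^{(k)})\to0$ forces each $V_{\theta^{(k)}}$ to be a classical (hence viscosity) solution of the HJI variational inequality up to a residual that vanishes in the supremum norm, and the only possible accumulation point of such a sequence is the unique viscosity solution, which under the standing assumption is the classical solution $V$. Set $\eta_k:=h_2(\theta^{(k)})$ and $\zeta_k:=h_1(\theta^{(k)})$; since $L(\theta^{(k)})=\zeta_k+\lambda\eta_k\to0$ with $\lambda>0$, both $\eta_k\to0$ and $\zeta_k\to0$. As $V_{\theta^{(k)}}$ is smooth, the definition of $h_2$ gives, pointwise on $[0,T]\times\R^n$, $-\eta_k\le\min\{\partial_t V_{\theta^{(k)}}+H_{\theta^{(k)}},\ \ell-V_{\theta^{(k)}}\}\le\eta_k$; in particular $V_{\theta^{(k)}}\le\ell+\eta_k$ and $\partial_t V_{\theta^{(k)}}+H_{\theta^{(k)}}\ge-\eta_k$ everywhere. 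A direct computation then shows the shifts $V_{\theta^{(k)}}^{\pm}:=V_{\theta^{(k)}}\pm\eta_k(T-t+1)$ are, respectively, a classical supersolution and a classical subsolution of $\min\{w_t+H[w],\ell-w\}=0$ on $[0,T)\times\R^n$ (adding a function of $t$ alone leaves $\nabla_x$, hence $H$, and the obstacle term structurally unchanged), with terminal traces within $\eta_k+\zeta_k$ of $\ell$ in sup norm.

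The first substantive step would be an a priori bound: $\{V_{\theta^{(k)}}\}$ is locally uniformly bounded on $[0,T]\times\R^n$. Boundedness above is immediate from $V_{\theta^{(k)}}\le\ell+\eta_k$. For boundedness below, note that every constant is a classical subsolution of $\min\{w_t+H[w],\ell-w\}=0$, and compare a sufficiently negative constant against the supersolution $V_{\theta^{(k)}}^{-}$ via the comparison principle for the variational inequality; using that $\ell$ is bounded below on the relevant region (employing finite propagation speed to localize, if needed) this yields, for large $k$, a locally uniform bound $V_{\theta^{(k)}}\ge V_{\theta^{(k)}}^{-}\ge\mathrm{const}$. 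With local uniform boundedness in hand, define the half-relaxed limits $\overline V:={\limsup}^{*}\,V_{\theta^{(k)}}$ and $\underline V:={\liminf}_{*}\,V_{\theta^{(k)}}$, which are finite, respectively u.s.c.\ and l.s.c., with $\underline V\le\overline V$.

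Next I would invoke the Barles--Perthame stability theorem: since $V_{\theta^{(k)}}$ solves $\min\{\partial_t V_{\theta^{(k)}}+H_{\theta^{(k)}},\ \ell-V_{\theta^{(k)}}\}=r_k$ in the viscosity sense with $\|r_k\|_\infty=\eta_k\to0$, the relaxed limit $\overline V$ is a viscosity subsolution and $\underline V$ a viscosity supersolution of $\min\{w_t+H[w],\ell-w\}=0$ on $[0,T)\times\R^n$; here one uses continuity of $(t,x,p)\mapsto H(t,x,p)=\sup_u\inf_d\langle p,f(x,u,d)\rangle$ so that the Hamiltonian passes to the limit through test functions. For the terminal layer, $\overline V(T,\cdot)\le\ell$ is immediate from $V_{\theta^{(k)}}\le\ell+\eta_k$, while $\underline V(T,\cdot)\ge\ell$ follows from a barrier argument near $t=T$: comparing $V_{\theta^{(k)}}^{-}$ against the sub-barrier $\ell(x)-\zeta_k-\eta_k-M(T-t)$, with $M$ chosen so that $M+H[\ell]\le0$ (possible since $H[\ell]$ is bounded under the standing assumptions), gives $V_{\theta^{(k)}}(t,x)\ge\ell(x)-\zeta_k-\eta_k$ near $t=T$, hence $\underline V(T,\cdot)\ge\ell$; combined with $\underline V\le\overline V\le\ell$ this forces $\overline V(T,\cdot)=\underline V(T,\cdot)=\ell$. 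The comparison principle, applied with this matching terminal data, then gives $\overline V\le\underline V$ on $[0,T]\times\R^n$, so $\overline V=\underline V=:W$; $W$ is continuous and is \emph{the} viscosity solution of the HJI variational inequality with terminal data $\ell$, which by uniqueness equals the classical solution $V$. Since the two relaxed limits of $V_{\theta^{(k)}}$ coincide with the continuous function $V$, the convergence $V_{\theta^{(k)}}\to V$ is locally uniform; in particular $\sup_{[0,T]\times K}|V_{\theta^{(k)}}-V|\to0$ for every compact $K\subset\R^n$.

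I expect the principal obstacle to be the PDE input rather than the soft-analysis scaffolding: the argument leans on a comparison principle (and, for the a priori bounds, its stability under perturbation of the data) for the HJI variational inequality on the \emph{unbounded} domain $\R^n$, which is exactly where the paper's structural hypotheses on $f$ and $\ell$ and the $\sC^{1,1}_{\text{loc}}$ regularity of $V$ are used in earnest; a secondary difficulty is the terminal-layer analysis, where the obstacle term $\ell-w$ obstructs naive constant-in-time shifts and forces the sub-/super-barrier construction above. An alternative, more hands-on route bypasses the half-relaxed limits: compare $V_{\theta^{(k)}}^{\pm}$ directly against $V$ (suitably shifted, with the obstacle shifted in tandem) to obtain the quantitative bound $\|V_{\theta^{(k)}}-V\|_{L^\infty([0,T]\times K)}\le C_K(\zeta_k+\eta_k)$; this is cleaner whenever a quantitative comparison principle is at hand, but since it requires the same care with the obstacle term near $t=T$ I would default to the half-relaxed-limit argument in the write-up.
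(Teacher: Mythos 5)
Your proposal follows essentially the same route as the paper's proof: both pass to the Barles--Perthame half-relaxed limits $\overline V$ and $\underline V$, use the stability of viscosity sub-/supersolutions under uniformly vanishing perturbations of the proper equation, apply the comparison principle to force $\overline V=\underline V$, and identify the common limit with $V$ via uniqueness of the viscosity solution, from which local uniform convergence follows. If anything, you are more careful than the paper on two points it glosses over --- the a priori local uniform boundedness needed for the relaxed limits to be finite, and the matching of terminal data at $t=T$ before invoking comparison --- so the proposal stands as a correct (and somewhat more complete) version of the same argument.
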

The proof of Theorem~\ref{th:convergence} relies on tools from the theory of viscosity solutions, a notion of weak solutions to HJI equations (and more generally, first-order nonlinear PDEs common in optimal control) first introduced in \cite{evans-hji}. Specifically, by establishing that a sequence of neural network approximators are viscosity solutions to a sequence of perturbed HJI equations and applying an appropriate comparison principle to two suitably-defined upper and lower limits of the neural network approximators, one may obtain the uniform convergence guarantee in Theorem~\ref{th:convergence}. This proof technique is presented rigorously in Appendix~\ref{sec:proofs}, and it follows the approach of \cite{hofgard2024deep} with several important deviations. Given the structure of Equation~\eqref{eq:hji-reach}, this technique generalizes well to the context of HJ reachability and DeepReach. 
\begin{remark}
\normalfont Note that Theorem~\ref{th:convergence} is agnostic towards the choice of optimization algorithm for minimizing the DeepReach loss. In particular, the convergence guarantee provided by the above result \textit{still} holds if a gradient-free (or even randomized) algorithm or heuristic is used to optimize the DeepReach loss. Although stochastic gradient descent is the standard method for optimizing the class of neural networks considered in \cite{deepreach,deepreach-guarantees,exact-boundary-deepreach}, the above convergence guarantee holds in any optimization setting.
\end{remark}
\section{Numerical Experiments}
\label{sec:experiments}
In this section, we provide numerical experiments that demonstrate the performance of DeepReach for the collision avoidance problem in Section~\ref{sec:prob-statement}. The first problem, presented in \cite{analytic-brt} and solved using DeepReach in \cite{deepreach, deepreach-guarantees}, is a standard three-dimensional reachability problem for testing neural network-based PDE solvers. All relevant code can be found \href{https://colab.research.google.com/drive/18MN5imhNE1xZs9bR64pnKOFFu7z9l0ic?usp=sharing}{this notebook}, which carries out experiments for the two-vehicle collision avoidance example from Section~\ref{sec:prob-statement}.

At each step of DeepReach training, $K = 65000$ uniformly-sampled points are used to train the neural network as in Equation~\eqref{eq:nn-loss}. All states are scaled to lie in the interval $[-1, 1]$, and the time interval in question is rescaled to satisfy $T = 1$, both of which empirically improve training stability. As in \cite{deepreach}, we utilize a simple three-layer feedforward neural network, with hidden layer size 512 and sinusoidal activation given by $\sigma(x) = \sin(x)$. The hyperparameter $\lambda$ is set to $\lambda = 0$ during pre-training and $\lambda = 1.5 \times 10^2$ during training, and the neural network loss in minimized using the Adam optimizer. The true BRT for the collision avoidance problem is computed analytically as in \cite{analytic-brt} and numerically, using the Level Set Toolbox (LST) PDE solver, using the implementation provided by \cite{deepreach}. LST also provides a numerical solution for the true value function $V$. 

Our implementation builds upon the open-source PyTorch implementation of the algorithm provided by the authors of \cite{deepreach}. In particular, we utilize the problem setup, neural network architecture, and training procedure outlined from the \href{https://github.com/smlbansal/deepreach/tree/master}{open-source package} provided by \cite{deepreach}. These results are displayed in Figures~\ref{fig:valfunc-diff}--~\ref{fig:brt-comp-pretrained}. On the other hand, Figures~\ref{fig:brt-comp-finetuned}--~\ref{fig:max-error} utilize pre-trained models from \cite{deepreach} that are fine-tuned with sup-norm loss instead of the $\ell^1$-loss metric considered in \cite{deepreach}.

In more detail, Figure~\ref{fig:valfunc-diff} illustrates the absolute difference between four slices of the true and approximate value functions, evaluated at time $t = 0.7$ and on the box $(x_1, x_2) \in [-1,1]^2$. Note that the figures should be interpreted in the setting of the collision avoidance problem introduced in Section~\ref{sec:prob-statement}, in which $(x_1, x_2, \theta)$ are the \textit{relative} coordinates between two agents. For $\theta = \pi$, which represents the two agents facing towards each other in relative coordinates, the pre-trained DeepReach approximation deteriorates in quality. This phenomenon is also reflected in Figure~\ref{fig:brt-comp}, which compares the approximate BRT computed via DeepReach to the true BRT, computed analytically as in \cite{analytic-brt}. However, after fine-tuning the model with sup-norm loss as defined in Equation~\eqref{eq:nn-loss}, the DeepReach BRT more closely approximates BRT for $\theta = \pi$, as seen in Figure~\ref{fig:brt-comp-pretrained} and Figure~\ref{fig:brt-comp-finetuned}. After only 1K epochs (approximately 5 minutes on an L4 Tensor Core GPU) of training, this fine-tuning procedure exhibits noticeable improvements over models pre-trained for 100K epochs (approximately 16 hours on an L4 GPU) with $\ell^1$-loss, both in terms of the loss metric itself \textit{and} the maximum approximation error, as seen in Figure~\ref{fig:deepreach-loss} and Figure~\ref{fig:max-error} respectively.\begin{figure}[htpb]
\begin{center}
\centerline{\includegraphics[width=\linewidth]{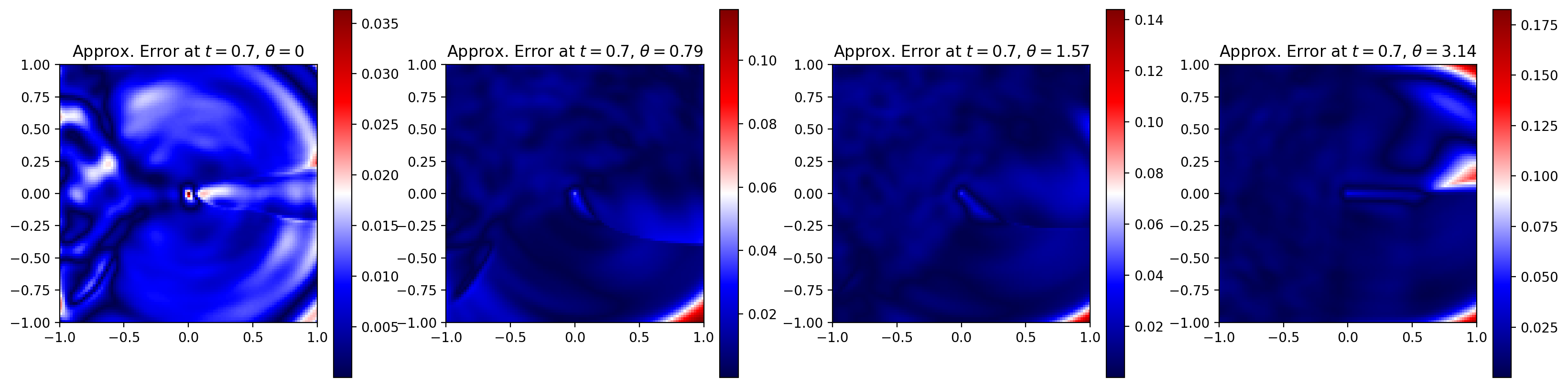}}
\caption{Absolute difference between true and approximate value functions, using pre-trained DeepReach model from \cite{deepreach}.}
\label{fig:valfunc-diff}
\end{center}
\end{figure}
\begin{figure}[htpb]
\begin{center}
\centerline{\includegraphics[width=1\linewidth]{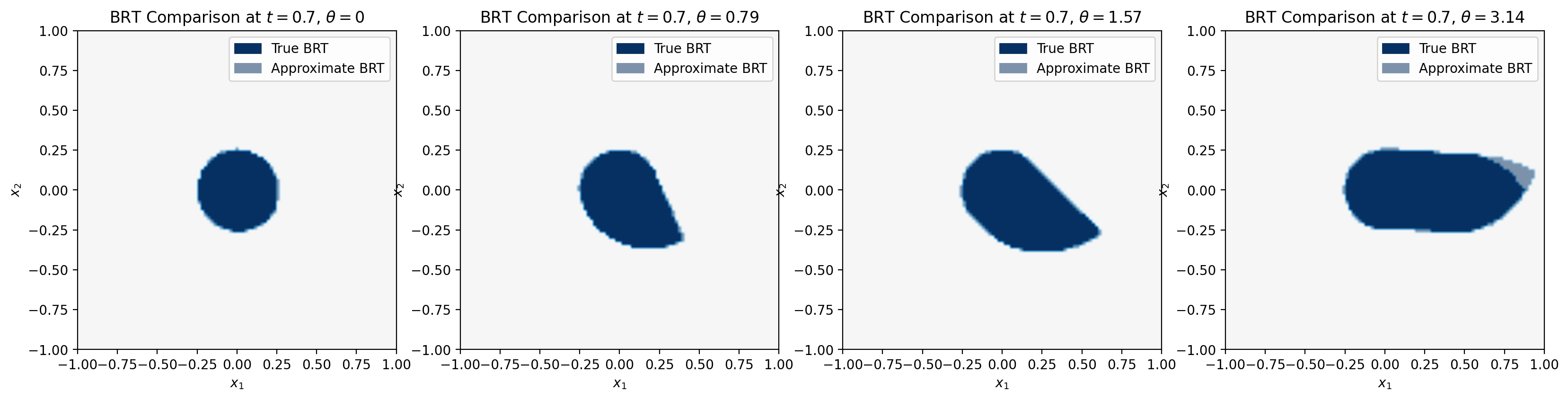}}
\caption{Comparison of BRT obtained by pre-trained DeepReach model and analytical BRT from \cite{analytic-brt}. Note the discrepancy between the two sets at $\theta = \pi$. }
\label{fig:brt-comp}
\end{center}
\end{figure}

\begin{figure}[htpb]
  \centering
	\begin{minipage}[b]{0.48\linewidth}
		\centering
		\includegraphics[width=\textwidth]{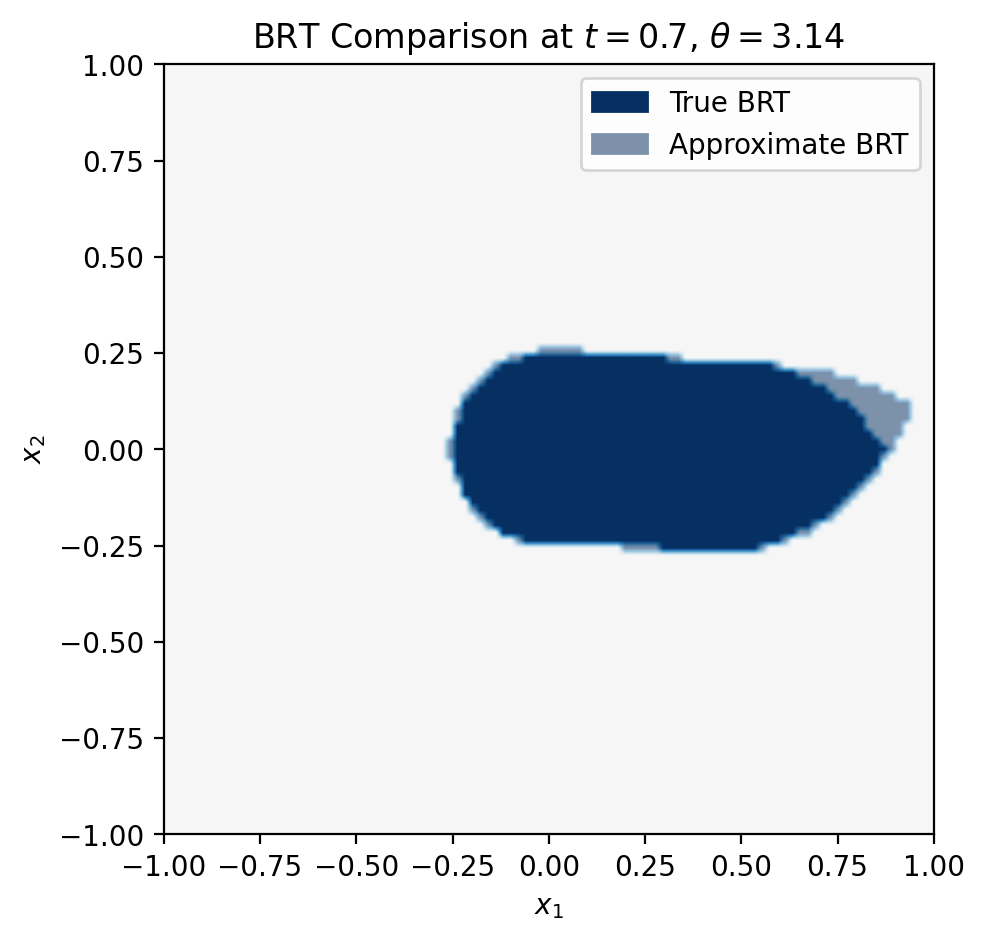}
		\caption{BRT comparison, using pre-trained  model from \cite{deepreach}.}
		\label{fig:brt-comp-pretrained}
	\end{minipage}
	\begin{minipage}[b]{0.48\linewidth}
		\centering
		\includegraphics[width=\textwidth]{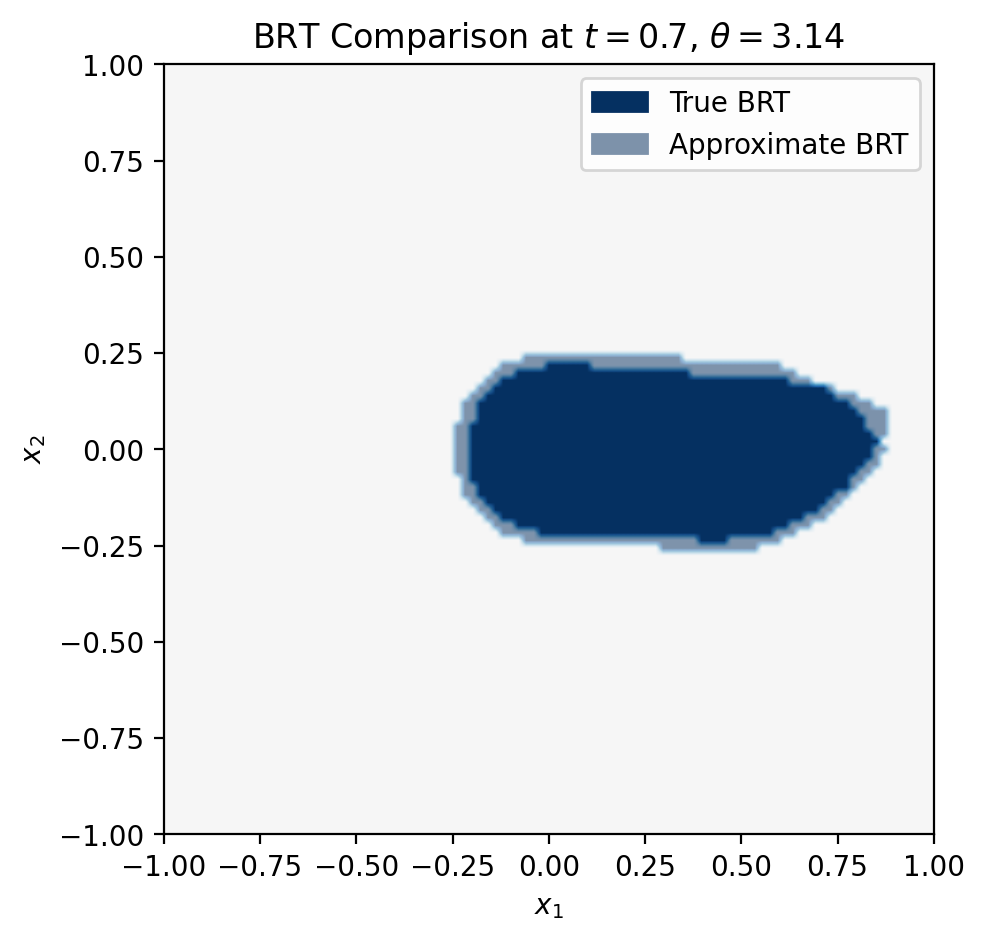}
		\caption{BRT comparison, using fine-tuned model with sup-norm loss.}
		\label{fig:brt-comp-finetuned}
	\end{minipage}
\end{figure}

\begin{figure}[htpb]
  \centering
	\begin{minipage}[b]{0.46\linewidth}
		\centering
		\includegraphics[width=\textwidth]{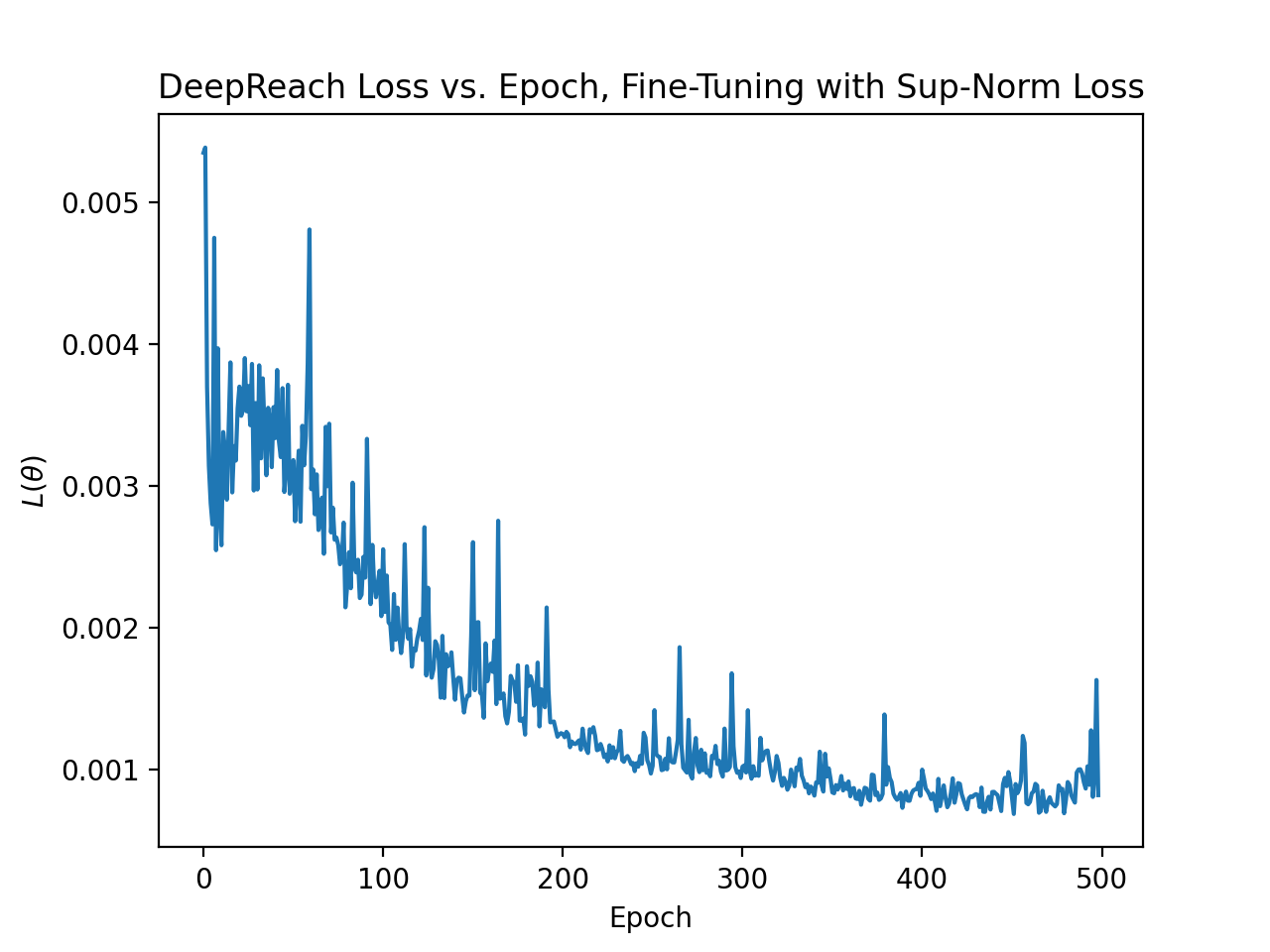}
		\caption{Fine-tuning with sup-norm loss results in significant decreases in DeepReach loss over just $500$ epochs.}
		\label{fig:deepreach-loss}
	\end{minipage}
	\hspace{0.5cm}
	\begin{minipage}[b]{0.46\linewidth}
		\centering
		\includegraphics[width=\textwidth]{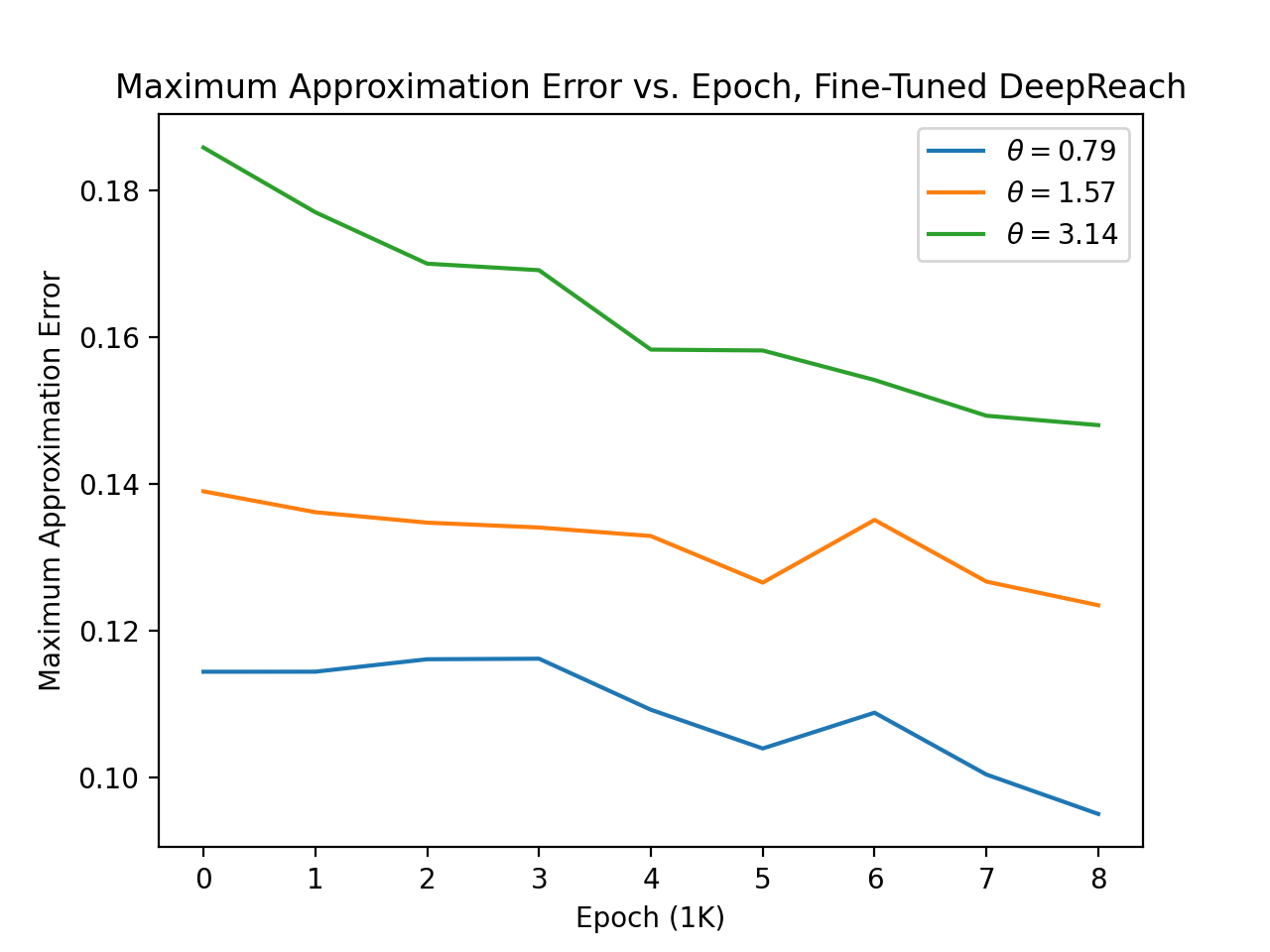}
		\caption{Training becomes slightly less stable over 10K epochs, but maximum approximation error continues to decrease.}
		\label{fig:max-error}
	\end{minipage}
\end{figure}

\section{Conclusions and Future Work}
\label{sec:conclusions}
We provide two novel convergence guarantees for a variant of the DeepReach algorithm, first introduced in \cite{deepreach}. In particular, by usinga  sup-norm loss metric, we show that the DeepReach loss can be made arbitrarily small by neural network approximators (Theorem~\ref{th:existence}) before showing that \textit{any} sequence of neural networks that takes the DeepReach loss to zero must converge uniformly to the true solution to the HJI equation in question (Theorem~\ref{th:convergence}). These two results prove that DeepReach is robust in the sense that is guaranteed to converge uniformly to the true value function that solves Equation~\eqref{eq:hji-reach} as long as the Deepreach loss in Equation~\eqref{eq:nn-loss} converges to zero. Furthermore, we demonstrate this convergence result empirically, showing that by using the sup-norm loss metric, pre-trained DeepReach models can be fine-tuned to a greater degree of accuracy.

Future work may aim to establish an explicit rate of convergence that depends on the DeepReach loss, as in \cite{cohen2024deep}, or investigate whether Theorem~\ref{th:convergence} can be extended to HJI equations that do \textit{not} admit classical solutions. Extending our convergence result to the altered training approach in \cite{exact-boundary-deepreach} is also of interest. In terms of implementation, improving the performance of DeepReach by utilizing \textit{adversarial training}, a common technique for training neural networks with sup-norm loss, is possible \cite{l2-pinns}. Additionally, experimenting with more expressive neural network architectures may yield better approximations.
\newline

\noindent
\textbf{Acknowledgments.} The author would like to thank Prof. Marco Pavone, Dr. Daniele Gammelli, Daniel Morton, and Matthew Foutter for their advice and feedback on this paper.
\bibliographystyle{unsrt}
\bibliography{refs}

\begin{thebibliography}{10}

\bibitem{reachability-overview}
Somil Bansal, Mo~Chen, Sylvia Herbert, and Claire~J Tomlin.
\newblock {Hamilton--Jacobi Reachability: A Brief Overview and Recent Advances}.
\newblock In {\em 2017 IEEE 56th Annual Conference on Decision and Control (CDC)}, pages 2242--2253. IEEE, 2017.

\bibitem{deepreach}
Somil Bansal and Claire Tomlin.
\newblock {DeepReach: A Deep Learning Approach to High-Dimensional Reachability}.
\newblock In {\em IEEE International Conference on Robotics and Automation (ICRA)}, 2021.

\bibitem{dgm}
Justin Sirignano and Konstantinos Spiliopoulos.
\newblock {DGM: A deep learning algorithm for solving partial differential equations}.
\newblock {\em Journal of Computational Physics}, 375:1339--1364, 2018.

\bibitem{deep-bsde-orig}
Weinan E, Jiequn Han, and Arnulf Jentzen.
\newblock {Deep learning-based numerical methods for high-dimensional parabolic partial differential equations and backward stochastic differential equations}.
\newblock {\em Commun. Math. Stat.}, 5(4):349--380, 2017.

\bibitem{deepreach-guarantees}
Albert Lin and Somil Bansal.
\newblock {Generating Formal Safety Assurances for High-Dimensional Reachability}, 2023.

\bibitem{hofgard2024deep}
William Hofgard, Jingruo Sun, and Asaf Cohen.
\newblock {Convergence of the Deep Galerkin Method for Mean Field Control Problems}, 2024.

\bibitem{cohen2024deep}
Asaf Cohen, Mathieu Laurière, and Ethan Zell.
\newblock {Deep Backward and Galerkin Methods for the Finite State Master Equation}, 2024.

\bibitem{l2-pinns}
Chuwei Wang, Shanda Li, Di~He, and Liwei Wang.
\newblock {Is $L^2$ Physics Informed Loss Always Suitable for Training Physics Informed Neural Network?}
\newblock In {\em Advances in Neural Information Processing Systems}, volume~35, pages 8278--8290. Curran Associates, Inc., 2022.

\bibitem{evans-hji}
L.~C. Evans and P.~E. Souganidis.
\newblock {Differential Games and Representation Formulas for Solutions of Hamilton-Jacobi-Isaacs Equations}.
\newblock {\em Indiana University Mathematics Journal}, 33(5):773--797, 1984.

\bibitem{conformal-verification}
Albert Lin and Somil Bansal.
\newblock {Verification of Neural Reachable Tubes via Scenario Optimization and Conformal Prediction}, 2024.

\bibitem{exact-boundary-deepreach}
Aditya Singh, Zeyuan Feng, and Somil Bansal.
\newblock {Imposing Exact Safety Specifications in Neural Reachable Tubes}, 2024.

\bibitem{enhancing-deepreach}
Qian Wang and Tianhao Wu.
\newblock {Enhancing the Performance of DeepReach on High-Dimensional Systems through Optimizing Activation Functions}, 2023.

\bibitem{analytic-brt}
Ian Mitchell.
\newblock {A Robust Controlled Backward Reach Tube with (Almost) Analytic Solution for Two Dubins Cars}.
\newblock In Goran Frehse and Matthias Althoff, editors, {\em ARCH20. 7th International Workshop on Applied Verification of Continuous and Hybrid Systems (ARCH20)}, volume~74 of {\em EPiC Series in Computing}, pages 242--258. EasyChair, 2020.

\bibitem{univ-approx-thm}
Kurt Hornik.
\newblock Approximation capabilities of multilayer feedforward networks.
\newblock {\em Neural Networks}, 4(2):251--257, 1991.

\bibitem{visc-users}
Michael~G. Crandall, Hitoshi Ishii, and Pierre-Louis Lions.
\newblock User's guide to viscosity solutions of second order partial differential equations.
\newblock {\em Bull. Amer. Math. Soc. (N.S.)}, 27(1):1--67, 1992.

\end{thebibliography}
\newpage
\appendix
\section{Technical Proofs}\label{sec:proofs}
In this section, we provide proofs of Theorem~\ref{th:existence} and Theorem~\ref{th:convergence} respectively. First, some additional notation is necessary. Consider an instantiation of Equation~\eqref{eq:hji-reach} on $\R \times \R^n$. Elements of the relevant class of fully-connected neural networks, as considered in \cite{deepreach,exact-boundary-deepreach} take the form
a network with $L$ layers, maximum width $n$, and a common activation function $\sigma$ take the form
\begin{align}
\label{eq:multi-layer-nn}
    V_\theta(t, x) := \sigma(W_{L} \ldots \sigma(W_1 x + \alpha t + c_1) \ldots + c_{L}),
\end{align}
where the activation function $\sigma$ is applied elementwise. Above, $W_i$ are weight matrices, $c_i$ are bias vectors, and $\alpha$ is a scalar weight. In turn, the parameters of each neural network are of the form $\theta = (W_1, \ldots, W_L, c_1, \ldots, c_L, \alpha) \in \R^P$ (upon flattening all weight matrices into vectors), where $P$ depends on the maximum width of the network, the depth $L$ of the network, and the dimension $n$ of the target HJI equation. In turn, we take $\mathfrak{C}^{(P)}_{n+1}(\sigma)$ to be the class of neural networks with parameters $\theta$ of dimension at most $P$ (but any number of layers $L$), from which we define $\mathfrak{C}_{n+1}(\sigma)$ as the class of neural networks with parameters $\theta$ of any dimension (i.e., allowing networks with unbounded width).

Additionally, given any compact subset $K \subset \R^n$, define the standard norm on $\sC^{1}(K)$, the space of continuously-differentiable functions $f : K \to \R$, given by
\begin{align*}
  \|f\|_{\sC^{1}(K)} := \sup_{x \in K} |f(x)| + \sup_{x \in K} \|\nabla f (x)\|_2
\end{align*}
Finally, define an operator by
\begin{align*}
  \mathcal{L}[V](t, x) := \min\{\partial_t V_\theta (t, x) + H_\theta(t, x), \ell(x) - V_\theta(t, x)\},
\end{align*}
observing that the first line of Equation~\eqref{eq:hji-reach} becomes $\mathcal{L}[V](t, x) = 0$. Throughout this section, we also impose the following mild technical assumptions.

\begin{assumption*}{Assumption A}
\label{a:assump-A}
We impose the following assumptions on the underlying reachability problem, with dynamics $\dot{x} = f(x, u, d)$:
\begin{enumerate}
\item[(1)]
There exist compact sets $\mathcal{U} \subseteq \R^m$ and $\mathcal{D} \subseteq \R^m$ such that all control inputs $u$ and disturbances $d$ satisfy $u \in \mathcal{U}$ and $d \in \mathcal{D}$.
\item[(2)]
There exists a compact set $K \subseteq \R^n$ such that for any initial state $x(0) \in K$ and sequence of optimal control inputs and disturbances $u \in \mathcal{U}$, $d \in \mathcal{D}$, the state trajectory satisfies $x(t) \in K$ for all $t \in [0, T]$.
\item[(3)] There exists a constant $C_f > 0$ such that for any $x \in K$, $\|f(x, u, d)\|_2 \leq C_f$ for all $(u, d) \in \mathcal{U} \times \mathcal{D}$.
\end{enumerate}
\end{assumption*}
For instance, these standard assumptions all hold for the collision avoidance example introduced in Section~\ref{sec:prob-statement}. By scaling all states to lie in the $d$-dimensional box $[-1, 1]^d$ in Section~\ref{sec:experiments} above, we ensure that the compactness assumption above is always met. If dynamics $f(x, u, d)$ are continuous, then the fact that all trajectories lie in some compact set $K$ immediately implies the boundedness assumption above. Effectively, we require bounded state trajectories, bounded control inputs and disturbances, and bounded dynamics. 

The next assumption, placed on Equation~\eqref{eq:hji-reach}, is discussed in detail in \cite{reachability-overview} and \cite{evans-hji}. Under a wide variety of circumstances, Equation~\eqref{eq:hji-reach} will indeed admit unique solutions.
\begin{assumption*}{Assumption B}
\label{a:assump-B}
There exists a unique, classical solution $V \in \sC^{1,1}(\Omega_K)$ to Equation~\eqref{eq:hji-reach}. Furthermore, $V$ is also the unique viscosity solution to Equation~\eqref{eq:hji-reach}, as defined in Definition~\ref{def:visc-solutions} below.
\end{assumption*}
At this point, we note that certain HJI equations \textit{only} admit viscosity solutions, the appropriate definition of a weak solution for first-order, nonlinear PDEs that resemble the HJI equation. The framework for establishing neural network approximation and convergence guarantees for equations that admit viscosity solutions \cite{hofgard2024deep}, however, does not yet extend to the case of equations that do not admit unique classical solutions. In many cases, however, including (again) the collision avoidance example in Section~\ref{sec:prob-statement}, this is not a concern.

With this notation in mind, we first prove Theorem~\ref{th:existence}, relying on the approximation guarantees provided by \cite[Theorem 3]{univ-approx-thm}. To make the proof more palatable, we construct several technical lemmas.
\begin{lemma}\label{lemma:univ-approx-est}
Let $V \in \mathcal{C}^{1,1}(\Omega_K)$ solve Equation~\eqref{eq:hji-reach} on $[0, T] \times K$, for some compact set $K \subset \R^n$. Then, for any $\varepsilon > 0$ and bounded, non-constant activation $\sigma : \R \to \R$, there exists $V_\theta \in \mathfrak{C}_{n+1}(\sigma)$ such that
\begin{align}
  \label{eq:univ-approx-est}
  \begin{split}
  \varepsilon > \sup_{(t,x) \in \Omega_K} |V(t,x) - V_\theta(t, x)| &+ \sup_{(t,x) \in \Omega_K} |\partial_t V(t,x) - \partial_t V_\theta(t, x)| \\
  &+ \sup_{(t, x) \in \Omega_K} \|\nabla_x V(t,x) - \nabla_x V_\theta(t, x)\|_2.
  \end{split}
\end{align}
\end{lemma}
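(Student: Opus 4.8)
The plan is to deduce the lemma directly from the derivative-form universal approximation theorem of \cite[Theorem 3]{univ-approx-thm}, applied on the compact set $\Omega_K = [0,T] \times K \subset \R^{n+1}$, and then to convert the resulting $\mathcal{C}^1$-norm estimate into the three-term bound in \eqref{eq:univ-approx-est} by elementary pointwise norm comparisons.

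First I would record that $V \in \mathcal{C}^{1,1}(\Omega_K) \subset \mathcal{C}^1(\Omega_K)$, so $V$ is an admissible target. Since $\sigma$ is $\mathcal{C}^1$ (as in the sinusoidal case $\sigma = \sin$), bounded, and non-constant, \cite[Theorem 3]{univ-approx-thm} guarantees that the network class $\mathfrak{C}_{n+1}(\sigma)$ is dense in $\mathcal{C}^1(\Omega_K)$ with respect to the $\mathcal{C}^1$-norm; that is, for any $\delta > 0$ there exists $V_\theta \in \mathfrak{C}_{n+1}(\sigma)$ with
\[
\sup_{(t,x)\in\Omega_K} |V(t,x) - V_\theta(t,x)| \;+\; \sup_{(t,x)\in\Omega_K} \big\|\nabla_{(t,x)}\big(V(t,x)-V_\theta(t,x)\big)\big\|_2 \;<\; \delta .
\]
Writing the full gradient as $\nabla_{(t,x)} g = (\partial_t g,\, \nabla_x g)$ and using $|\partial_t g| \le \|\nabla_{(t,x)} g\|_2$ and $\|\nabla_x g\|_2 \le \|\nabla_{(t,x)} g\|_2$ pointwise on $\Omega_K$, the right-hand side of \eqref{eq:univ-approx-est} is bounded above by $\sup|V-V_\theta| + 2\sup\|\nabla_{(t,x)}(V-V_\theta)\|_2 \le 2\delta$. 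Choosing $\delta = \varepsilon/2$ then yields the claim.

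The step requiring care --- and the main obstacle --- is verifying that the hypotheses of \cite[Theorem 3]{univ-approx-thm} genuinely apply to the architecture \eqref{eq:multi-layer-nn}. Classical statements of the derivative-form universal approximation theorem are phrased for single-hidden-layer networks with a \emph{linear} output layer, whereas \eqref{eq:multi-layer-nn} applies $\sigma$ at every layer, including the last, so a bounded activation cannot directly represent functions of large amplitude. To bridge this, I would argue that $\mathfrak{C}_{n+1}(\sigma)$ contains networks arbitrarily $\mathcal{C}^1$-close on $\Omega_K$ to any single-hidden-layer network $\sum_j \beta_j \sigma(w_j \cdot x + \alpha_j t + b_j)$: picking $z_0$ with $\sigma'(z_0) \neq 0$, the map $z \mapsto \big(\sigma(z_0 + \mu z) - \sigma(z_0)\big)/(\mu\,\sigma'(z_0))$ converges to the identity in $\mathcal{C}^1$ on compacts as $\mu \to 0$, so an affine readout can be emulated to arbitrary $\mathcal{C}^1$-accuracy by $\sigma$-units with suitably scaled weights (and depth is unrestricted in $\mathfrak{C}_{n+1}(\sigma)$, so the extra layer is permissible). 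Composing this emulation with the classical single-hidden-layer statement gives $\mathcal{C}^1$-density of $\mathfrak{C}_{n+1}(\sigma)$ in $\mathcal{C}^1(\Omega_K)$, which is exactly what the argument above requires; everything else in the proof is routine.
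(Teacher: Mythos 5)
Your main step is exactly the paper's: invoke the derivative--form universal approximation theorem \cite[Theorem 3]{univ-approx-thm} with $m=1$ on the compact set $\Omega_K$, and note that a $\mathcal{C}^1$-norm bound controls the three suprema in \eqref{eq:univ-approx-est}. Your bookkeeping $\,|\partial_t g|,\ \|\nabla_x g\|_2 \le \|\nabla_{(t,x)}g\|_2$, giving the factor $2$ and the choice $\delta = \varepsilon/2$, is correct and is in fact slightly more careful than the paper, which passes from the $\mathcal{C}^1$-density statement to \eqref{eq:univ-approx-est} without comment. (Both you and the paper implicitly strengthen ``bounded, non-constant'' to ``bounded, non-constant, and $\mathcal{C}^1$,'' which is what Theorem 3 actually requires; this is harmless for $\sigma=\sin$.)

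The one place your argument does not close is the architecture-reconciliation step, and unfortunately the difficulty you correctly identified is not resolved by the fix you propose. If the outermost operation of every network in $\mathfrak{C}_{n+1}(\sigma)$ is the bounded activation $\sigma$, as Equation~\eqref{eq:multi-layer-nn} literally states, then every $V_\theta$ in the class satisfies $\|V_\theta\|_{\infty} \le \|\sigma\|_{\infty}$, and no choice of weights can change this; the class then cannot uniformly approximate any $V$ with $\sup_{\Omega_K}|V| \ge \|\sigma\|_{\infty}$, and the lemma as stated would simply be false for such $V$. Your near-identity construction $z \mapsto \bigl(\sigma(z_0+\mu z)-\sigma(z_0)\bigr)/\bigl(\mu\,\sigma'(z_0)\bigr)$ does converge to the identity in $\mathcal{C}^1$ on compacts, but the subtraction of $\sigma(z_0)$ and the division by $\mu\,\sigma'(z_0)$ are affine operations applied \emph{after} the final activation, which is precisely what \eqref{eq:multi-layer-nn} forbids; adding further $\sigma$-layers (depth being unrestricted) never escapes the range constraint. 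The correct resolution is not an emulation argument but a reading of the class: the networks actually used in \cite{deepreach} (and the networks in Hornik's theorem) terminate in a linear output layer $W_L(\cdot)+c_L$ with no outer $\sigma$, and with that reading of $\mathfrak{C}_{n+1}(\sigma)$ --- which is the one the paper's proof tacitly adopts --- your first paragraph already constitutes a complete proof. As written, though, your proof asserts that the emulation ``gives $\mathcal{C}^1$-density of $\mathfrak{C}_{n+1}(\sigma)$'' for the $\sigma$-terminated class, and that assertion is false whenever $\|V\|_{\infty}\ge\|\sigma\|_{\infty}$.
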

\begin{proof}
This is a direct consequence of the universal approximation theorem for neural networks, stated in Proposition~\cite[Theorem 3]{univ-approx-thm}. In particular, the result therein states that if $\sigma \in \sC^m(\R)$ is a non-constant and bounded activation, then $\mathfrak{C}_{n+1}$ is uniformly $m$-dense on compact sets in $\sC^m(\R^{n+1})$. In particular, for all $h \in \sC^m(\R^{n+1})$, all compact subsets $K \subset \R^{n+1}$, and any $\varepsilon > 0$, there exists $V_\theta \in \mathfrak{C}_{n+1}$ such that $\|h - \psi\|_{\sC^m(K)} < \varepsilon$. Taking $m = 1$ suffices to prove the lemma.
\end{proof}
\noindent
Equipped with the preceding lemma, the following estimates will allow us to prove Theorem~\ref{th:existence}.
\begin{lemma}\label{lemma:unif-approx-bounds}
Assume that $\varepsilon > 0$ and $V_\theta$ are as in Lemma~\ref{lemma:univ-approx-est} and
\begin{align*}
  H_\theta(t, x) := \sup_{u} \inf_d \langle \nabla_x V_\theta(t, x), f(x, u, d)\rangle.
\end{align*}
Then, there exists $C_f > 0$ depending only on the dynamics $f$ such that
\begin{align*}
  |H_\theta(t, x) - H(t, x)| < C_f \varepsilon
\end{align*}
for any $(t, x) \in \Omega_K$. Furthermore, $|V_\theta(T, x) - \ell(x)| < \varepsilon$ for all $x \in K$.
\end{lemma}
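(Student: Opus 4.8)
The plan is to handle the two assertions separately, disposing of the terminal bound first since it is immediate. By Assumption~B the classical solution $V$ satisfies $V(T,x) = \ell(x)$ for every $x \in K$, so $|V_\theta(T,x) - \ell(x)| = |V_\theta(T,x) - V(T,x)| \le \sup_{(t,x)\in\Omega_K}|V(t,x) - V_\theta(t,x)| < \varepsilon$, the final inequality being exactly the first term in the estimate from Lemma~\ref{lemma:univ-approx-est}. That settles the second claim with room to spare.

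For the Hamiltonian bound I would fix $(t,x)\in\Omega_K$ and first control the difference of the two integrands uniformly over the players' inputs. For any $(u,d)\in\mathcal{U}\times\mathcal{D}$, Cauchy--Schwarz gives
\[
\bigl|\langle \nabla_x V_\theta(t,x), f(x,u,d)\rangle - \langle \nabla_x V(t,x), f(x,u,d)\rangle\bigr| \le \|\nabla_x V_\theta(t,x) - \nabla_x V(t,x)\|_2 \,\|f(x,u,d)\|_2.
\]
Since $x\in K$, Assumption~A(3) bounds $\|f(x,u,d)\|_2$ by the constant $C_f$ (the same $C_f$ named in the lemma, so it depends only on $f$), while Lemma~\ref{lemma:univ-approx-est} bounds the gradient factor by $\varepsilon$ uniformly on $\Omega_K$. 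Hence the functions $(u,d)\mapsto\langle \nabla_x V_\theta(t,x), f(x,u,d)\rangle$ and $(u,d)\mapsto\langle \nabla_x V(t,x), f(x,u,d)\rangle$ differ by at most $C_f\varepsilon$ at every point of $\mathcal{U}\times\mathcal{D}$.

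It then remains to push this pointwise estimate through the $\sup_u\inf_d$ operation, and here I would invoke the elementary fact that $g \mapsto \sup_u\inf_d g(u,d)$ is non-expansive in the supremum norm: if $\|g_1 - g_2\|_\infty \le \delta$ then from $g_2 - \delta \le g_1 \le g_2 + \delta$ and monotonicity of $\sup$ and $\inf$ one gets $|\sup_u\inf_d g_1 - \sup_u\inf_d g_2| \le \delta$. Applying this with $\delta = C_f\varepsilon$ yields $|H_\theta(t,x) - H(t,x)| < C_f\varepsilon$, and since $(t,x)$ was arbitrary this is the first claim. I do not expect a real obstacle here; the only point that needs a word is that the sup and inf are finite (so the non-expansiveness statement is meaningful), which holds because $\nabla_x V$ and $\nabla_x V_\theta$ are continuous on the compact set $\Omega_K$ while $f$ is bounded there by Assumption~A(3)---indeed, since $\mathcal{U}$ and $\mathcal{D}$ are compact and the integrand is continuous, the sup and inf are attained.
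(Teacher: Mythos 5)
Your proposal is correct, and it follows the same basic route as the paper: the terminal bound is handled identically (via $V(T,x)=\ell(x)$ and the zeroth-order term of Lemma~\ref{lemma:univ-approx-est}), and the Hamiltonian bound ultimately rests on Cauchy--Schwarz together with the boundedness of the dynamics from Assumption~A(3). The one genuine difference is how the perturbation is pushed through the $\sup_u\inf_d$ operation. The paper writes $\nabla_x V_\theta = \nabla_x V + \varepsilon p$ with $\|p\|_2<1$ and then asserts the \emph{equality}
\[
\sup_{u}\inf_{d}\langle \nabla_x V + \varepsilon p, f\rangle - \sup_{u}\inf_{d}\langle \nabla_x V, f\rangle = \varepsilon\,\sup_{u}\inf_{d}\langle p, f\rangle,
\]
which is not valid in general, since $g\mapsto\sup_u\inf_d g$ is not linear; what is true (and suffices for the subsequent chain of inequalities) is precisely the non-expansiveness estimate you state, namely that $\|g_1-g_2\|_\infty\le\delta$ implies $|\sup_u\inf_d g_1 - \sup_u\inf_d g_2|\le\delta$. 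So your version is the rigorous form of the paper's argument: you first bound the integrands uniformly over $(u,d)$ by $C_f\varepsilon$ and then invoke monotonicity of $\sup$ and $\inf$, rather than factoring the perturbation out of the game value. Your closing remark about attainment of the sup and inf on the compact sets $\mathcal{U}\times\mathcal{D}$ is not strictly needed (finiteness follows from boundedness alone), but it is harmless. No gap.
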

\begin{proof}
The second estimate follows easily from Lemma~\ref{lemma:univ-approx-est} and the fact that $V(T, x) = \ell(x)$. Indeed, the fact that $\sup_{(t,x) \in \Omega_K} |V(t,x) - V_\theta(t, x)|  < \varepsilon$ implies that
\begin{align*}
  |\ell(x) - V_\theta(T, x)| = |V(T, x) - V_\theta(T, x)| < \varepsilon
\end{align*}
for all $x \in K$.  On the other hand, Lemma~\ref{lemma:univ-approx-est} also guarantees that for any $(t, x) \in \Omega_K$,
\begin{align*}
  \sup_{(t, x) \in \Omega_K} \|\nabla_x V(t,x) - \nabla_x V_\theta(t, x)\|_2 < \varepsilon.
\end{align*}
As a result, for \textit{fixed} $(t, x) \in \Omega_K$, taking 
\begin{align*}
  p(t, x) := \frac{1}{\varepsilon}(\nabla_x V(t,x) - \nabla_x V_\theta(t, x)) \in \R^n,
\end{align*}
it follows that
\begin{align*}
  \nabla_x V_\theta(t,x) = \nabla_x V(t, x) + \varepsilon p(t, x),
\end{align*}
where $\|p(t, x)\|_2 < 1$. In turn, we can write
\begin{align*}
  |H_\theta(t, x) - H(t, x)| &= \left|\sup_{u} \inf_d \langle \nabla_x V_\theta(t, x), f(x, u, d)\rangle - \sup_{u} \inf_d \langle \nabla_x V(t, x), f(x, u, d)\rangle\right| \\
  &= \left|\sup_{u} \inf_d \langle \nabla_x V(t, x) + \varepsilon p(t, x), f(x, u, d)\rangle - \sup_{u} \inf_d \langle \nabla_x V(t, x), f(x, u, d)\rangle\right| \\
  &=\varepsilon \left| \sup_{u} \inf_d \langle p(t, x), f(x, u, d)\rangle\right| \\
  &\leq \varepsilon \sup_u \left|\inf_d \langle p(t, x), f(x, u, d)\rangle \right| \\
  &\leq \varepsilon \sup_u \sup_d \left|\langle p(t, x), f(x, u, d)\rangle\right| \\
  &\leq \varepsilon \sup_u \sup_d \|p(t, x)\|_2 \|f(x, u ,d)\|_2 \\
  &< \varepsilon \sup_u \sup_d \|f(x, u ,d)\|_2 \\
  &\leq C_f \varepsilon,
\end{align*}
The chain of inequalities above utilizes several standard tricks to interchange the absolute value with the supremum over control inputs $u$ (resp. disturbances $d$) before applying the Cauchy--Schwarz inequality and the fact that $\|p(t, x)\|_2 < 1$ for all $(t, x) \in \Omega_K$. Finally, the last inequality utilizes the assumption that the dynamics are bounded.
\end{proof}
Because the bounds obtained in Lemma~\ref{lemma:unif-approx-bounds} are uniform in $(t, x) \in \Omega_K$, we can in fact write that
\begin{align*}
\|H_\theta - H\|_{\infty} < C_f \varepsilon, \quad \|V_\theta(T, \cdot) - \ell\|_\infty < \varepsilon,
\end{align*}
where $\|\cdot\|_\infty$ denotes the supremum norm on $\Omega_K$ as in the definition of the DeepReach loss in Equation~\eqref{eq:nn-loss}. With both of the above lemmas, we can now prove Theorem~\ref{th:existence}. 
\begin{proof}[Proof of Theorem~\ref{th:existence}]
By Lemma~\ref{lemma:univ-approx-est}, we obtain the existence of $V_\theta \in \mathfrak{C}_{n+1}(\sigma)$ such that
\begin{align*}
\varepsilon > \sup_{(t,x) \in \Omega_K} |V(t,x) - V_\theta(t, x)| &+ \sup_{(t,x) \in \Omega_K} |\partial_t V(t,x) - \partial_t V_\theta(t, x)| \\
&+ \sup_{(t, x) \in \Omega_K} \|\nabla_x V(t,x) - \nabla_x V_\theta(t, x)\|_2.
\end{align*}
It remains to show that $V_\theta$ satisfies the conditions in the statement of Theorem~\ref{th:existence}. Namely, we must show that $L(\theta) \leq C \varepsilon$ for some $C > 0$ that only depends on the dynamics of the underlying reachability problem. Recall that
\begin{align*}
  L(\theta) &= h_1(\theta) + \lambda h_2(\theta) \\
  &= \|V_\theta(T, x) - \ell(x)\|_\infty + \lambda \|\min\{\partial_t V_\theta(t, x) + H_\theta(t,x), \ell(x) - V_\theta(t, x)\}\|_{\infty},
\end{align*}
where $\lambda > 0$ is a tunable parameter. Now, Lemma~\ref{lemma:unif-approx-bounds} allows us to establish two useful lower bounds. First, because $V$ satisfies the HJI equation in Equation~\eqref{eq:hji-reach}, we have that
\begin{align*}
  \partial_t V(t, x) + H(t, x) \geq \min\{\partial_t V(t, x) + H(t, x), \ell(x) - V(t, x)\} = 0
\end{align*}
for all $(t, x) \in \Omega_K$. Thus, it follows that
\begin{align*}
  \partial_t V_\theta(t, x) + H_\theta(t, x) \geq \partial_t V_\theta(t, x) + H_\theta(t, x) - \partial_t V(t, x) + H(t, x) > - \varepsilon - C_f \varepsilon,
\end{align*}
applying two of the bounds from Lemma~\ref{lemma:unif-approx-bounds}. Taking $C := 2\max\{C_f, 1\}$, it follows that 
\begin{align}\label{eq:lower-bound-deriv}
  \partial_t V_\theta(t, x) + H_\theta(t, x) > -C \varepsilon
\end{align}
for all $(t, x) \in \Omega_K$. Similarly, observe that because 
\begin{align*}
  \ell(x) - V(t, x) \geq \{\partial_t V(t, x) + H(t, x), \ell(x) - V(t, x)\} = 0
\end{align*}
for all $(t, x)$, we have that
\begin{align}\label{eq:lower-bound-terminal}
  \ell(x) - V_\theta(t, x) \geq \ell(x) - V_\theta(t, x) - \ell(x) - V(t, x) = V_\theta(t, x) - V(t,x) > -\varepsilon \geq -C \varepsilon
\end{align}
again applying the corresponding bound from Lemma~\ref{lemma:unif-approx-bounds}. Combining Equation~\eqref{eq:lower-bound-deriv} and Equation~\eqref{eq:lower-bound-terminal}, it follows that
\begin{align*}
  \min\{\partial_t V_\theta(t, x) + H_\theta(t, x), \ell(x) - V_\theta(t, x)\} > -C \varepsilon
\end{align*}
for all $(t, x) \in \Omega_K$.

Now, because $V$ solves Equation~\eqref{eq:hji-reach}, for all $(t, x) \in \Omega_K$, we must have that either $\partial_t V(t, x) + H(t, x) = 0$ or $\ell(x) - V(t, x) = 0$ (or, both conditions could possibly hold, but this case is not relevant below). In the former case, we have that
\begin{align*}
  -C \varepsilon < \min\{\partial_t V_\theta(t, x) + H_\theta(t, x), \ell(x) - V_\theta(t, x)\} &\leq \partial_t V_\theta(t, x) + H_\theta(t, x) \\
  &= \partial_t V_\theta(t, x) + H_\theta(t, x) - \partial_t(t, x) - H(t, x) \\
  &\leq C \varepsilon,
\end{align*}
again applying the result of Lemma~\ref{lemma:unif-approx-bounds} in the last line. On the other hand, if $(t, x) \in \Omega_K$ is such that $\ell(x) - V(t, x) = 0$, then we see that 
\begin{align*}
  -C \varepsilon < \min\{\partial_t V_\theta(t, x) + H_\theta(t, x), \ell(x) - V_\theta(t, x)\} &\leq \ell(x) - V_\theta(t, x) \\
  &= \ell(x) - V_\theta(t, x) - (\ell(x) - V(t, x)) \\
  &= V(t, x) - V_\theta(t, x) \\
  &< \varepsilon \\
  &\leq C \varepsilon,
\end{align*}
applying the result of Lemma~\ref{lemma:unif-approx-bounds} yet again. Because the above two cases are exhaustive, it follows that
\begin{align*}
  |\min\{\partial_t V_\theta(t, x) + H_\theta(t, x), \ell(x) - V_\theta(t, x)\}| < C \varepsilon
\end{align*}
for all $(t, x) \in \Omega_K$. In other words,
\begin{align*}
  h_2(\theta) = \|\min\{\partial_t V_\theta(t, x) + H_\theta(t,x), \ell(x) - V_\theta(t, x)\}\|_{\infty} < C \varepsilon.
\end{align*}
Now, the fact that $\ell(x) = V(T, x)$ for all $x \in K$ implies that
\begin{align*}
  h_1(\theta) = \|V_\theta(T, x) - \ell(x)\|_\infty = \|V_\theta(T, x) - V(t, x)\|_\infty < \varepsilon \leq C \varepsilon,
\end{align*}
by Lemma~\ref{lemma:unif-approx-bounds}. Thus, it follows that
\begin{align*}
  L(\theta) = h_1(\theta) + \lambda h_2(\theta) < (C + \lambda C) \varepsilon.
\end{align*}
By taking $C' := C + \lambda C$, which only depends on the parameter $\lambda > 0$ and the dynamics $f$ of the underlying reachability problem, we conclude that for any $\varepsilon > 0$, there exists some set of parameters $\theta$ such that $L(\theta) < C' \varepsilon$.
\end{proof}
The above result ensures that the DeepReach loss is an appropriate metric for solving Equation~\eqref{eq:hji-reach} numerically: good approximations of the true solution to Equation~\eqref{eq:hji-reach} correspond to small loss values in Equation~\eqref{eq:nn-loss}. However, it is not particularly practical. To provide a practical guarantee, we turn to Theorem~\ref{th:convergence}. Rather than simply establishing the \textit{existence} of a network that makes the DeepReach loss arbitrarily small, Theorem~\ref{th:convergence} shows that by performing training under which the DeepReach loss converges to zero (which is possible by the above result), the corresponding neural networks must converge uniformly to the solution to Equation~\eqref{eq:hji-reach}. In some sense, this result justifies the use of the DeepReach algorithm, as it guarantees that training via gradient descent to minimize the DeepReach loss will result in estimators that uniformly converge to the true value function for a given reachability problem.

Before presenting the proof of Theorem~\ref{th:convergence}, several tools from PDE theory, and specifically, the theory of viscosity solutions to PDEs, are necessary. Note that, as discussed in \cite{reachability-overview} and \cite{evans-hji}, the HJI equation in Equation~\eqref{eq:hji-reach} admits a viscosity solution $V$. We utilize the following standard definition of viscosity solutions, introduced in \cite{evans-hji}:
\begin{definition}
\label{def:visc-solutions}
A function $v \in \sC((0, T) \times \intr(K))$ is: 
\begin{enumerate}
\item[(i)] a viscosity subsolution of Equation~\eqref{eq:hji-reach} if for any test function $\varphi \in \sC^1((0, T) \times \intr(K))$, $\mathcal{L}[\varphi] \leq 0$ for every local maximum $(t_0, x_0) \in (0, T) \times \intr(K)$ of $v - \varphi$ on $(0, T) \times \intr(K)$.
\item[(ii)] a viscosity supersolution of Equation~\eqref{eq:hji-reach} if for any test function $\varphi \in \sC^1((0, T) \times \intr(K))$, $\mathcal{L}[\varphi] \geq 0$ for every local minmum $(t_0, x_0) \in (0, T) \times \intr(K)$ of $v - \varphi$ on $(0, T) \times \intr(K)$.
\item[(iii)] a viscosity solution of Equation~\eqref{eq:hji-reach} if $v$ is both a viscosity subsolution and viscosity supersolution.
\end{enumerate}
\end{definition}
More background on viscosity solutions, their motivation, and their many useful properties can be found in \cite{evans-hji} and \cite{visc-users}. Without going into unnecessary technical detail, we remark that any classical solution to Equation~\eqref{eq:hji-reach} is also a viscosity solution, a fact that we leverage below \cite{visc-users}. Additionally, viscosity solutions to Equation~\eqref{eq:hji-reach} satisfy a standard comparison principle, in the sense that if $u$ is a viscosity supersolution and $v$ a viscosity subsolution, then $v \leq u$ on $[0, T) \times K$ \cite{evans-hji,visc-users}.

We also require the notion of a \textit{proper} nonlinear PDE, in the context of viscosity solutions. This concept, also from \cite{visc-users} is crucial for establishing uniform convergence guarantees for equations that admit viscosity solutions.
\begin{definition}\label{def:proper}
Suppose that a nonlinear PDE is of the form $F(x, u, Du, D^2 u) = 0$, where $F : \R^n \times \R \times \R^n \times \mathcal{S}^n \to \R$, with $\mathcal{S}^n$ denoting the set of symmetric matrices. Then, $F$ is proper if, holding all other inputs fixed,
\begin{align*}
  F(x, r, p, X) \leq F(x, s, p, Y)
\end{align*}
for all $r \leq s$ and $X \preceq Y$, with the latter denoting the standard order on PSD matrices.
\end{definition}
From the above definition a straightforward reformulation of Equation~\eqref{eq:hji-reach} shows that the HJI equation in question is necessarily proper.
\begin{lemma}\label{lemma:proper}
There exists a map $F : \Omega_K \times \R \times \R^{n+1} \times \mathcal{S}^{n+1} \to \R$ such that $V$ solves Equation~\eqref{eq:hji-reach} if and only if 
\begin{align*}
  F(t, x, V(t,x), \nabla V (t, x), \nabla^2 V(t, x)) = 0,
\end{align*}
and the resulting equation is proper in the sense of Definition~\ref{def:proper}.
\end{lemma}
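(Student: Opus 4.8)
The plan is to write the HJI operator in its \emph{max} form, which is what makes monotonicity in the zeroth-order slot come out with the sign demanded by Definition~\ref{def:proper}. Writing a cotangent vector as $p = (p_0, q) \in \R \times \R^n \cong \R^{n+1}$, with $p_0$ occupying the $\partial_t$ slot and $q$ the $\nabla_x$ slot of $\nabla V = (\partial_t V, \nabla_x V)$, and setting
\begin{align*}
  \mathcal{H}(x, q) := \sup_{u \in \mathcal{U}} \inf_{d \in \mathcal{D}} \langle q, f(x, u, d) \rangle,
\end{align*}
I would define
\begin{align*}
  F(t, x, r, p, X) := \max\left\{\, -p_0 - \mathcal{H}(x, q), \;\; r - \ell(x) \,\right\},
\end{align*}
which is simply independent of the Hessian slot $X \in \mathcal{S}^{n+1}$, as one expects for a first-order equation. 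The first step is the finiteness/well-posedness check for $\mathcal{H}$: by Assumption~A the sets $\mathcal{U},\mathcal{D}$ are compact and $\|f(x,u,d)\|_2 \le C_f$ on $K \times \mathcal{U} \times \mathcal{D}$, so $|\mathcal{H}(x,q)| \le C_f \|q\|_2 < \infty$, and continuity of $f$ together with compactness makes the inner infimum and outer supremum attained, so $F$ is a genuine real-valued map on $\Omega_K \times \R \times \R^{n+1} \times \mathcal{S}^{n+1}$.

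Next I would verify the equivalence. Evaluating $F$ along a $\sC^1$ function $V$ and using $\mathcal{H}(x, \nabla_x V(t,x)) = H(t,x)$ from Equation~\eqref{eq:hamiltonian} gives
\begin{align*}
  F\bigl(t, x, V(t,x), \nabla V(t,x), \nabla^2 V(t,x)\bigr) = \max\bigl\{\, -\partial_t V(t,x) - H(t,x), \;\; V(t,x) - \ell(x) \,\bigr\}.
\end{align*}
By the elementary identity $\max\{a,b\} = 0 \iff \min\{-a,-b\} = 0$, the right-hand side vanishes exactly when $\min\{\partial_t V(t,x) + H(t,x),\, \ell(x) - V(t,x)\} = \mathcal{L}[V](t,x) = 0$, i.e. precisely when the first line of Equation~\eqref{eq:hji-reach} holds at $(t,x)$. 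I would remark that the terminal condition $V(T,\cdot) = \ell$ is imposed separately, as is standard in the viscosity formulation, so ``$V$ solves Equation~\eqref{eq:hji-reach}'' is captured by $F(\cdot, V, \nabla V, \nabla^2 V) = 0$ together with that boundary condition.

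Finally I would check properness. Since $F$ does not depend on $X$ at all, $F(t,x,r,p,X) = F(t,x,r,p,Y)$ for every $X \preceq Y$, so the Hessian-monotonicity requirement of Definition~\ref{def:proper} is vacuous. For the zeroth-order argument, write $F(t,x,r,p,X) = \max\{g(t,x,p),\, r - \ell(x)\}$ with $g$ independent of $r$; if $r \le s$ then $r - \ell(x) \le s - \ell(x)$, and since $\tau \mapsto \max\{g(t,x,p), \tau\}$ is nondecreasing we get $F(t,x,r,p,X) \le F(t,x,s,p,Y)$, which is exactly the defining inequality. There is no real obstacle here beyond bookkeeping; the one point that requires care is the sign convention: had one taken $F = \mathcal{L}[V]$ directly (the \emph{min} form), the resulting $F$ would be \emph{non}increasing in $r$ and properness as stated would fail, so passing to $-\mathcal{L}[V]$, equivalently the \emph{max} reformulation above, is precisely what orients the zeroth-order monotonicity correctly.
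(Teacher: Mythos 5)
Your proposal is correct and follows essentially the same route as the paper: rewrite the HJI operator in its \emph{max} form, define $F(t,x,r,p,X) := \max\{-p_t - \sup_u\inf_d \langle p_x, f(x,u,d)\rangle,\, r - \ell(x)\}$, and verify properness from independence of $X$ and monotonicity of $r \mapsto r - \ell(x)$. The extra touches you add (the finiteness check for the Hamiltonian via Assumption~A and the remark that the \emph{min} form would fail properness) are sound but do not change the argument.
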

\begin{proof}
Any solution to Equation~\eqref{eq:hji-reach} satisfies
\begin{align*}
\min\left\{\partial_t V(t, x) + H(t, x), \ell(x) - V(t, x)\right\} = 0
\end{align*}
on $\Omega_K$. Because $\min\{a, b\} = -\max\{-a, -b\}$, the above equality holds if and only if
\begin{align}\label{eq:max-reform}
  -\max\left\{-\partial_t V(t, x) - H(t, x), V(t, x) - \ell(x)\right\} = 0.
\end{align}
Thus, we take 
\begin{align}\label{eq:proper-op-hji}
  F(t, x, r, p, X) := \max\left\{-p_t - \sup_u \inf_d \langle p_x, f(x, u, d) \rangle, r - \ell(x)\right\},
\end{align}
where $p = (p_t, p_x) \in \R \times \R^n$, taking into account both the time and spatial derivatives of solutions to Equation~\eqref{eq:hji-reach}. Then, Equation~\eqref{eq:max-reform} holds if and only if 
\begin{align*}
  F(t, x, V(t,x), \nabla V (t, x), \nabla^2 V(t, x)) = 0.
\end{align*}
Furthermore, $F$ defines a proper equation because it does not depend on the input $X \in \mathcal{S}^{n+1}$, and if $r \leq s$, then $r - \ell(x) \leq s - \ell(x)$ for any $x \in \Omega_K$. In turn,
\begin{align*}
  F(t, x, r, p, X) &= \max\left\{-p_t - \sup_u \inf_d \langle p_x, f(x, u, d) \rangle, r - \ell(x)\right\} \\
  &\leq \max\left\{-p_t - \sup_u \inf_d \langle p_x, f(x, u, d) \rangle, s - \ell(x)\right\} \\
  &= F(t, x, s, p, X),
\end{align*}
holding all other inputs fixed. Thus, $F$ is proper.
\end{proof}
Before proving Theorem~\ref{th:convergence}, consider the following \textit{perturbed} version of Equation~\eqref{eq:hji-reach}, indexed by $k \in \N$.
\begin{align}
\label{eq:hji-perturbed}
\begin{split}
&\min\left\{\partial_t U(t, x) + H(t, x), \ell(x) - U(t, x)\right\} = \varepsilon^{(k)}(t, x), \\
& U(T, x) = \ell(x) + \delta^{(k)}(x).
\end{split}
\end{align}
From Theorem~\ref{th:existence}, it immediately follows that there exists a sequence of neural networks $\{V_{\theta^{(k)}}\}_{k \in \N}$ that solve perturbed HJI equations of the above form. 
Indeed, taking
\begin{align*}
  \varepsilon^{(k)}(t, x) := \min\left\{\partial_t V_{\theta^{(k)}}(t, x) + H_{\theta^{(k)}}(t, x), \ell(x) - U(t, x)\right\}, \quad \delta^{(k)}(x) := V_{\theta^{(k)}}(T, x) - \ell(x),
\end{align*}
Theorem~\ref{th:existence} establishes that the sequence $\{V_{\theta^{(k)}}\}_{k \in \N}$ satisfies Equation~\eqref{eq:hji-perturbed}, and we further have that
\begin{align*}
  L(\theta^{(k)}) = h_1(\theta^{(k)}) + \lambda h_2(\theta^{(k)}) = \|\varepsilon^{(k)}\|_\infty + \lambda \|\delta^{(k)}\|_\infty \to 0
\end{align*}
as $k \to \infty$. Conversely, it also holds that if $L(\theta^{(k)}) \to 0$, then the corresponding neural networks satisfy perturbed equations of he form in Equation~\eqref{eq:hji-perturbed}. Furthermore, by the construction of the DeepReach loss, it follows that $\|\varepsilon^{(k)}\|_\infty \to 0$ and $\|\delta^{(k)}\|_\infty \to 0$ as $k \to \infty$. From Theorem~\eqref{th:existence}, we simply know that such a sequence of parameters exists. 

Now, the sequence of above equations could instead be written as
\begin{align*}
  F(t, x, U(t, x), \nabla U(t, x), \nabla^2 U(t, x)) = \max\left\{-\partial_t U(t, x) - H(t, x), \ell(x) - U(t, x)\right\} = -\varepsilon^{(k)}(t, x),
\end{align*}
where $F$ is as in Lemma~\ref{lemma:proper}. In turn, defining
\begin{align}\label{eq:proper-op-hji-perturbed}
  F^{(k)}(t, x, U(t, x), \nabla U(t, x), \nabla^2 U(t, x)) := F(t, x, U(t, x), \nabla U(t, x), \nabla^2 U(t, x)) + \varepsilon^{(k)}(t, x),
\end{align}
it immediately follows from Lemma~\ref{lemma:proper} that the sequence in Equation~\eqref{eq:hji-perturbed} is proper, given by
\begin{align}\label{eq:proper-perturbed-equations}
  F^{(k)}(t, x, U(t, x), \nabla U(t, x), \nabla^2 U(t, x)) = 0,
\end{align}
and that $F^{(k)} \to F$ uniformly as $k \to \infty$.

The novelty of Theorem~\ref{th:convergence}, however, lies in establishing that for \textit{any} such sequence of neural networks $\{V_{\theta^{(k)}}\}_{k \in \N}$ obtained via training, $V_{\theta^{(k)}} \to V$ uniformly on $\Omega_K$. In essence, rather than simply establishing the existence of such a sequence, the following proof guarantees that standard training procedures such as gradient descent, coupled with standard techniques to avoid local minima (e.g., momentum, randomized restarts or perturbations, etc.), on the DeepReach loss will result in approximations that uniformly converge to the true solution to Equation~\eqref{eq:hji-reach} as training progresses. We require one more technical lemma, based on the convergence properties of viscosity solutions from \cite{visc-users}.
\begin{lemma}\label{lemma:visc-super-sub-conv}
Suppose that $u_k$ is a sequence of viscosity subsolutions to a sequence of proper equations, defined by $F^{(k)}(x, u, Du, D^2 u) = 0$, for $x \in K \subset \R^n$ compact. Then, 
\begin{align*}
  \overline{u}(x) := \lim_{j \to \infty} \sup\left\{u_k(y) : k \geq j, \; y \in K, \; \|y - x\|_2 \leq \frac{1}{j} \right\}
\end{align*}
is a viscosity subsolution to the equation $\overline{F}(x, u, Du, D^2 u) = 0$, where
\begin{align*}
  \overline{F}(x, u, Du, D^2 u) := \limsup_{k \to \infty} F^{(k)}(x, u, Du, D^2 u).
\end{align*}
Similarly, if $u_k$ is instead a sequence of viscosity supersolutions to $F_k(x, u, Du, D^2 u) = 0$, then 
\begin{align*}
  \underline{u}(x) := \lim_{j \to \infty} \inf\left\{u_k(y) : k \geq j, \; y \in K, \; \|y - x\|_2 \leq \frac{1}{j} \right\}
\end{align*}
is a viscosity supersolution to the equation $\underline{F}(x, u, Du, D^2 u) = 0$, where $\underline{F}$ is defined by taking the limit infimum of $F^{(k)}$ above.
\end{lemma}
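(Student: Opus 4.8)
The plan is to run the classical \emph{method of half-relaxed limits} of Barles--Perthame, as developed for proper (degenerate elliptic) equations in \cite[Section~6]{visc-users}, specialized to the sequence $\{F^{(k)}\}$ at hand. It suffices to treat the subsolution assertion: the supersolution case follows by the symmetric argument — exchanging maxima with minima, $\limsup$ with $\liminf$, and $\le$ with $\ge$ — or equivalently by applying the subsolution case to $-u_k$ and the equations $-F^{(k)}(x,-r,-p,-X)=0$. Since each $F^{(k)}$ here is first order (it does not depend on its Hessian slot, by Lemma~\ref{lemma:proper}), I only need to test against $\varphi \in \sC^1$, consistent with Definition~\ref{def:visc-solutions}; and I would first record that $\overline u$ is well defined, upper semicontinuous on $K$, and finite, using that the family $\{u_k\}$ is locally uniformly bounded (in the intended application $u_k = V_{\theta^{(k)}}$, and this boundedness on $\Omega_K$ is part of the ambient setup).

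Next I would fix a test function $\varphi$ and a point $x_0$ in the interior of $K$ at which $\overline u - \varphi$ has a local maximum, and, after replacing $\varphi$ by $\varphi + \|x-x_0\|_2^2$, assume this maximum is \emph{strict} on a closed ball $\overline B(x_0,r) \subset \intr(K)$. The goal is then $\overline F\bigl(x_0, \overline u(x_0), D\varphi(x_0)\bigr) \le 0$. The core of the proof is the standard maximizer-extraction lemma: there exist a subsequence (not relabeled) and points $x_k \in \argmax_{\overline B(x_0,r)}(u_k - \varphi)$ with $x_k \to x_0$ and $u_k(x_k) \to \overline u(x_0)$. I would establish this exactly as in \cite{visc-users}: pick any maximizer $x_k$ on the ball, extract a convergent subsequence $x_k \to \bar x \in \overline B(x_0,r)$, and use the definition of $\overline u$ as a half-relaxed limit together with the strictness of the maximum at $x_0$ to force $\bar x = x_0$ and to pin down $u_k(x_k) \to \overline u(x_0)$; since $x_0$ is interior, the $x_k$ are eventually interior and hence genuine local maximizers of $u_k - \varphi$.

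With the $x_k$ in hand, the subsolution property of $u_k$ for $F^{(k)}=0$ gives
\[
F^{(k)}\bigl(x_k,\, u_k(x_k),\, D\varphi(x_k),\, D^2\varphi(x_k)\bigr) \le 0
\]
for all large $k$, and taking $\limsup_{k\to\infty}$ while using $x_k \to x_0$, $u_k(x_k) \to \overline u(x_0)$, and continuity of the derivatives of $\varphi$ yields $\overline F\bigl(x_0, \overline u(x_0), D\varphi(x_0)\bigr) \le 0$, as desired. One small point needs care here: when the evaluation points move, the object one really wants is the \emph{upper relaxed limit} of the $F^{(k)}$ rather than their pointwise $\limsup$; in our situation, however, $F^{(k)} = F + \varepsilon^{(k)}$ with $\varepsilon^{(k)} \to 0$ uniformly and $F$ continuous, so the two coincide and both equal $F$, and no generality is lost by the formulation in the lemma.

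I expect the extraction of the maximizers $x_k$ to be the only genuinely delicate step: one must make the maximum of $\overline u - \varphi$ strict (hence the quadratic perturbation of $\varphi$), ensure that the local uniform boundedness making $\overline u$ finite is actually available, and — the essential use of the half-relaxed-limit construction — rule out the possibility that the maximizers converge to a point of the ball other than $x_0$, which is where upper semicontinuity of $\overline u$ enters. Everything after that, including passing to the limit in the subsolution inequality and handling the supersolution half, is routine once the uniform convergence $F^{(k)} \to F$ and the definitions of $\overline u$ and $\underline u$ are in place.
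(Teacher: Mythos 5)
Your proposal is correct and follows the same route as the paper: the paper's proof of this lemma is simply a citation to Lemma~6.1 and Remarks~6.2--6.4 of \cite{visc-users}, and what you have written out is precisely the Barles--Perthame half-relaxed-limits argument behind that citation (strict-maximum perturbation, maximizer extraction, passage to the limit in the subsolution inequality). Your observation that the pointwise $\limsup_k F^{(k)}$ should in general be the upper relaxed limit---harmless here since $F^{(k)} = F + \varepsilon^{(k)}$ with $\varepsilon^{(k)} \to 0$ uniformly and $F$ continuous---is a correct refinement of the statement as given.
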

\begin{proof}
This is the result of Lemma 6.1 and the corresponding Remarks 6.2, 6.3, and 6.4 in \cite{visc-users}.
\end{proof}
Equipped with the above technical lemma, we are finally prepared to prove Theorem~\ref{th:convergence}.
\begin{proof}[Proof of Theorem~\ref{th:convergence}]
Consider any sequence of parameters $\{\theta^{(k)}\}_{k \in \N}$ such that $L(\theta^{(k)}) \to 0$ as $k \to \infty$. As discussed above, such a sequence of parameters defines a family $\{V_{\theta^{(k)}}\}_{k \in \N}$ that satisfies the sequence of perturbed PDEs in Equation~\eqref{eq:hji-perturbed}, with error terms $\varepsilon^{(k)} : [0, T] \times K \to \R$ and $\delta^{(k)} : K \to \R$ satisfying
\begin{align*}
  \|\varepsilon^{(k)}\|_{\infty} \to 0, \quad \|\delta^{(k)}\|_\infty \to 0
\end{align*}
as $k \to \infty$. By Lemma~\ref{lemma:proper}, these perturbed PDEs are equivalently described by proper formulations, defined in Equation~\eqref{eq:proper-perturbed-equations}. Now, to invoke Lemma~\ref{lemma:visc-super-sub-conv}, we define
\begin{align*}
  \overline{V}(t, x) := \lim_{j \to \infty} \sup\left\{V_{\theta^{(k)}}(s, y) : k \geq j, \; (s, y) \in \Omega_K, \; \|(t, x) - (s, y)\|_2 \leq \frac{1}{j} \right\}
\end{align*}
and
\begin{align*}
  \underline{V}(t, x) := \lim_{j \to \infty} \inf\left\{V_{\theta^{(k)}}(s, y) : k \geq j, \; (s, y) \in \Omega_K, \; \|(t, x) - (s, y)\|_2 \leq \frac{1}{j} \right\}.
\end{align*}
Because $F^{(k)} \to F$ uniformly on $\Omega_K \times \R \times \R^{n+1} \times \mathcal{S}^{n+1}$ as $k \to \infty$, it follows that
\begin{align*}
  \liminf_{k \to \infty} F^{(k)} = F = \limsup_{k \to \infty} F^{(k)}.
\end{align*}
Thus, by Lemma~\ref{lemma:visc-super-sub-conv}, $\overline{V}(t, x)$ is a viscosity subsolution to the proper equation defined by $F$ in Equation~\eqref{eq:proper-op-hji}. Similarly, $\underline{V}(t, x)$ is a viscosity supersolution to the proper equation defined in Equation~\eqref{eq:proper-op-hji}. Note that, by construction, $\overline{V}$ and $\underline{V}$ satisfy the pointwise inequality
\begin{align*}
  \underline{V}(t, x) \leq \overline{V}(t, x)
\end{align*}
for all $(t, x) \in \Omega_K$. However, because $\overline{V}$ is a viscosity supersolution and $\underline{V}$ a viscosity subsolution, by the comparison principle for viscosity solutions in \cite[Thoerem 3.3]{visc-users}, we have that
\begin{align*}
  \overline{V}(t, x) \leq \underline{V}(t, x)
\end{align*}
for all $(t, x) \in \Omega_K$. Consequently, it follows that $\overline{V}(t, x) = \underline{V}(t, x)$, and the resulting function is both a viscosity subsolution and viscosity supersolution. Hence, by Definition~\ref{def:visc-solutions}, $\overline{V}(t, x) = \underline{V}(t, x)$ is a viscosity solution to Equation~\eqref{eq:proper-op-hji}. By Assumption~\hyperref[a:assump-B]{(B)}, however, Equation~\eqref{eq:proper-op-hji} admits a unique viscosity solution, so 
\begin{align*}
  \overline{V}(t, x) = \underline{V}(t, x) = V(t, x)
\end{align*}
Then, by \cite[Remark 6.4]{visc-users}, the construction of $\overline{V}$ and $\underline{V}$ respectively ensures that 
\begin{align*}
  \lim_{k \to \infty} V_{\theta^{(k)}}(t, x) = V(t, x)
\end{align*}
on any compact subset of $[0, T) \times K$. If not, then there would exists $\nu > 0$ and a sequence $\{n_k\}_{k \in \N}$ and $\{(t_k, x_k)\}_{k \in \N} \in [0, T) \times K$, with $(t_k, x_k) \to [0, T) \times K$, such that
\begin{align*}
  V_{\theta^{(n_k)}}(t_k, x_k) - V(t_k, x_k) > \varepsilon \quad \text{or} \quad V_{\theta^{(n_k)}}(t_k, x_k) - V(t_k, x_k) < -\varepsilon.
\end{align*}
Taking $k \to \infty$ however, and using the definitions of $\overline{V}$ and $\underline{V}$ respectively, the continuity of $V$ would imply that
\begin{align*}
  |V(t, x) - V(t, x)| \geq \varepsilon,
\end{align*}
a clear contradiction. Thus, uniform convergence holds any any compact subset of $[0, T) \times K$. To extend convergence to \textit{all} of $\Omega_K = [0, T] \times K$, recall that $\delta^{(k)}(x) = V_{\theta^{(k)}}(T, x) - \ell(x)$ converges uniformly to zero as $k \to \infty$, for all $x \in K$. As a result, it follows that 
\begin{align*}
  \lim_{k \to \infty} \sup_{(t, x) \in \Omega_K} |V_{\theta^{(k)}}(t, x) - V(t, x)| = 0
\end{align*}
as claimed.
\end{proof}
\end{document}